\newtheorem{thm}{Theorem}[section]
\newtheorem{lemma}[thm]{Lemma}
\newtheorem{cor}[thm]{Corollary}
\newtheorem{prop}[thm]{Proposition}
\theoremstyle{remark}
\newtheorem{rmk}[thm]{Remark}
\theoremstyle{definition}
\newtheorem{defin}[thm]{Definition}
\numberwithin{equation}{section}
\DeclareMathOperator{\dist}{d}
\DeclareMathOperator*{\diam}{diam}
\def\hessxy{\D\Dbar}
\renewcommand{\epsilon}{\varepsilon}
\newcommand{\R}{\mathbb{R}}
\newcommand{\bdry}[1]{\partial{#1}}
\newcommand{\Omegabar}{\bar{\Omega}}
\DeclareMathOperator{\cl}{cl}
\def\xbar{\bar{x}}
\newcommand{\omclose}{\Omega^{\cl}}
\newcommand{\Omegabarclose}{\Omegabar^{\cl}}
\newcommand{\pdiff}[2][x]{\ensuremath{\frac{\partial}{\partial#1_{#2}}}}
\newcommand{\delzero}{\delta_0}
\newcommand{\Tinv}{T^{-1}}
\newcommand{\dsub}[2][]{d^{#2}_{#1}}
\newcommand{\dhat}[2][]{\hat{\dsub[#2]{#1}}}
\newcommand{\alphasub}{f}
\newcommand{\alphahat}{\hat{\alphasub}}
\newcommand{\boldd}{{\mathbf d}}
\newcommand{\phid}[1][\boldd]{\phi_{#1}}
\newcommand{\vis}[2][\boldd]{V_{#1}(#2)}
\newcommand{\G}[2][\boldd]{G(#1, #2)}
\newcommand{\GUp}[1]{G^{#1}}
\newcommand{\phidel}{\Phi_\delta}
\newcommand{\deltad}[1]{\Delta d^{#1}}
\newcommand{\deltanorm}{\lvert\Delta \boldd\rvert}
\newcommand{\bolddeltad}{\Delta {\bf d}}
\newcommand{\W}{W}
\newcommand{\Wdel}{\W^{\bolddeltad}}
\newcommand{\V}{V}
\newcommand{\p}{p}
\newcommand{\meas}[1]{\mu\left(#1\right)}
\newcommand{\calH}{\mathcal{H}}
\newcommand{\calN}{\mathcal{N}}
\newcommand{\surface}[1]{\calH^{\dim-1}\left(#1\right)}
\newcommand{\inner}[2]{\ensuremath{\langle#1, #2\rangle}}
\newcommand{\proj}[2][\dim]{\pi^{#1}(#2)}
\newcommand{\Leb}[1]{\left\vert{#1}\right\vert_{\mathcal{L}^n}}
\newcommand{\coord}[2]{\left[#1\right]_{#2}}
\newcommand{\Omegacoord}[1]{\coord{\Omega}{#1}}
\newcommand{\cExp}[2]{exp^c_{#1}({#2})}
\newcommand{\MTWcoord}[4]{-(c_{#1#2, \bar{p}\bar{q}}-c_{#1#2, \bar{r}}c^{\bar{r}, s}c_{s, \bar{p}\bar{q}})c^{\bar{p}, #3}c^{\bar{q}, #4}}
\def\D{D}
\def\Dbar{\bar{D}}
\def\E{E}
\def\Ebar{\bar{E}}
\def\M{M}
\def\Mbar{\bar{M}}
\DeclareMathOperator{\dVol}{dVol}
\def\ctil{\tilde{c}}
\def\I{I}
\def\maxsource{\sup_{\Omega}{\I}}
\def\maxconst{C}
\def\minconst{\Lambda}
\def\parameterinterval{M}
\newcommand{\Wcoord}[2]{\coord{\W_{#1}}{\xbar_{#2}}}
\newcommand{\Wdelcoord}[3][\bolddeltad]{\coord{\W^{#1}_{#2}}{\xbar_{#3}}}
\newcommand{\Wbdrycoord}[2]{\coord{\partial{\W_{#1}}}{\xbar_{#2}}}
\newcommand{\ombdrycoord}[1]{\coord{\partial\Omega}{#1}}
\newcommand{\plane}[1]{P^{#1}}
\def\nstop{{n_\epsilon}}
\newcommand\dbar[2][]{\bar{d}_{#1}^{#2}}
\def\dim{d}
\def\paralellconst{\alpha}
\newcommand{\distM}[2]{\dist_\M{\left({#1}, {#2}\right)}}
\newcommand{\distMbar}[2]{\dist_{\Mbar}{\left({#1}, {#2}\right)}}
\def\constone{C''}
\def\pbar{\bar{p}}
\newcommand{\norm}[1]{\lvert {#1}\rvert}
\author{Jun Kitagawa}
\title{An iterative scheme for solving the optimal transportation problem}
\begin{document}
\maketitle

\begin{abstract}
We demonstrate an iterative scheme to approximate the optimal transportation problem with a discrete target measure under certain standard conditions on the cost function. Additionally, we give a finite upper bound on the number of iterations necessary for the scheme to terminate, in terms of the error tolerance and number of points in the support of the discrete target measure.
\end{abstract}

\section{Introduction}
In~\cite{CKO99}, Caffarelli, Kochengin, and Oliker developed a numerical algorithm to calculate approximations to discretizations of the far-field reflector antenna problem, and also gave a finite upper bound on the number of steps necessary. Later, Xu-Jia Wang showed that the far-field reflector problem can be interpreted as an optimal transportation problem (see~\cite{Wan04}, and also~\cite{Loe10}). A number of similar algorithms have also been considered, each for a problem related to some form of optimal transportation problem:~\cite{OP88} proposes an algorithm to solve the classical Monge-Amp{\`e}re equation in two dimensions (see however, Remark~\ref{rmk: A3w possible} below), while~\cite{KO98} considers the near-field reflector problem where the target domain is contained in a flat domain. Such results lead to a natural question: can a similar numerical algorithm can be developed for more general optimal transportation problems (and perhaps more importantly, with a similar upper bound on the number of iterations)?

In this paper, we answer this question in the affirmative, given certain conditions on the optimal transportation cost function. Namely, we show that an iterative scheme similar to the one given by Caffarelli, Kochengin, and Oliker can be applied to optimal transportation problems associated to a cost function satisfying the conditions~\eqref{A0},~\eqref{twist},~\eqref{nondeg}, and~\eqref{A3s} (one can verify that the optimal transportation formulation of the far-field reflector problem satisfies the required conditions, see Section~\ref{section: prelim} for relevant definitions). Additionally, we give a finite upper bound on the number of iterations necessary for this scheme to halt, which is of the same form as what is proven in~\cite{CKO99}. We are careful to note, however, that we do not provide a bound on the actual computational complexity of the scheme, which may vary depending on the geometric details of the particular cost function (towards this direction, Kochengin and Oliker further explore the actual implementation of the algorithm for the far-field reflector problem in~\cite{KO03}).

The aforementioned conditions are natural in the theory of optimal transportation,~\eqref{A3s} was first introduced by Ma, Trudinger, and Wang in~\cite{MTW05}, where they show it is a key condition leading to a proof of regularity properties. Later, Loeper proved in~\cite{Loe09} that a weaker form of~\eqref{A3s} is actually necessary for regularity, and also showed  a number of geometric implications of~\eqref{A3s} and its weaker form (see also~\cite{KM10}). It is precisely these geometric implications that we exploit in this paper, namely that certain sublevel sets possess a generalized notion of strong convexity, and use this to show that calculations similar to Caffarelli, Kochengin, and Oliker can be applied to this more general case. 

The organization of this paper is as follows. In Section~\ref{section: prelim} we introduce the optimal transportation problem, and a number of basic conditions and results classical to the literature. In Section~\ref{section: main results} we state the main results that are claimed in this paper, while in Section~\ref{section: algorithm description} we give a detailed description of the aforemetioned iterative scheme. Section~\ref{section: derivative of G} is devoted to a series of calculations, which are invoked in Section~\ref{section: upper bound on steps} in order to provide the claimed upper bound~\eqref{eqn: error bound} on the number of iterations necessary.

\section{Preliminaries of Optimal Transportation}\label{section: prelim}
In this section, we recall some basic facts and definitions regarding the optimal transportation problem, along with the key conditions introduced in~\cite{MTW05}. For a (much) more comprehensive study of the optimal transportation problem, the interested reader is referred to~\cite{Vil09}.

Given two domains $\Omega$ and $\Omegabar$, and two probability measures $\mu$ and $\nu$ defined on them, along with a real valued cost function $c$ defined on $\omclose \times \Omegabarclose$, we wish to find a measurable mapping $T: \Omega \to \Omegabar$ satisfying $T_\# \mu = \nu$ (defined by $T_\# \meas{\Ebar}= \meas{\Tinv(\Ebar)}$ for all measurable $\Ebar\subseteq \Omegabar$) such that 
\begin{equation*}
\int_\Omega c(x, T(x))d\mu = \min_{S_\#\mu=\nu}\int_\Omega c(x, S(x))d\mu.
\end{equation*}
We will say that such a $T$ is a \emph{solution to the optimal transportation problem}, or a \emph{Monge solution}, for the cost function $c$ transporting the measure $\mu$ to the measure $\nu$.

Under mild conditions on $c$ and the measure $\mu$, it is known that a unique solution to this problem exists. For instance, it is sufficient if $\Omega$ and $\Omegabar$ are bounded subsets of Riemannian manifolds $\M$ and $\Mbar$ respectively, $c$ satisfies conditions~\eqref{A0},~\eqref{twist}, and~\eqref{nondeg} below, and $\mu$ is absolutely continuous with respect to the volume measure $\dVol_{\M}$ defined by the Riemannian metric on $\M$. Additionally, in this case $T$ can be determined from a scalar valued, Lipschitz continuous potential function $\phi$ by the formula
\begin{equation*}
T(x)=\cExp{x}{D\phi(x)}
\end{equation*}
where the differential $D\phi(x)$ exists in the $\dVol_{\M}$ almost-everywhere sense, and $\cExp{x}{p}$ is defined below (see~\cite[Chapter 10]{Vil09}).

Now let $\Omega$ and $\Omegabar$ be open, bounded domains with piecewise smooth boundaries in $\dim$--dimensional Riemannian manifolds $\M$ and $\Mbar$ respectively. We introduce a number of key conditions on the cost function $c$. Below, $\D c$ and $\Dbar c$ are the differential of $c$ in the $x$ and $\xbar$ variable respectively, which unambiguously defines the mapping 
\begin{align*}
-\hessxy{c(x, \xbar)}: T_{x}\Omega \to T_{-\Dbar c(x, \xbar)}\left(T^*_{\xbar}\Omegabar\right)\cong T^*_{\xbar}\Omegabar.
\end{align*}
\\
\noindent\underline{\bf{Smoothness of cost function:}}

Assume that
\begin{equation}\label{A0}
c \in C^{4}(\omclose \times \Omegabarclose).\tag{Reg}\\
\end{equation}

\noindent\underline{\bf{Twist condition:}} 

$c$ satisfies~\eqref{twist} if for each $x_0 \in\omclose$ and $\xbar_0 \in \Omegabarclose$, the mappings $\xbar \mapsto - \D c(x_0, \xbar)$ and $x \mapsto - \Dbar c(x, \xbar_0)$ are injective.

For any $\pbar \in - \D c(x_0, \Omegabar)$ and $x_0 \in \Omega$, (resp. $p \in - \Dbar c(\Omega, \xbar_0)$ and $\xbar_0 \in \Omegabar$) we write $\cExp{x_0}{\pbar}$ (resp. $\cExp{\xbar_0}{p}$) for the unique element of $\Omegabar$ (resp. $\Omega$) such that 
\begin{equation}
\begin{aligned}
- \D c(x_0, \cExp{x_0}{\pbar})&=\pbar,\\
- \Dbar c(\cExp{\xbar_0}{p}, \xbar_0)&=p.
\end{aligned}
\tag{Twist}
\label{twist}
\end{equation}
\begin{rmk}\label{rmk: sets in cotangent coordinates}
For fixed $x\in\Omega$ and $\xbar\in\Omegabar$, we will denote the representations of sets $\E\subset\Omega$ and $\Ebar\subset\Omegabar$ in the cotangent spaces above $x$ and $\xbar$ by
\begin{align*}
\coord{\E}{\xbar}:&=-\Dbar c(\E, \xbar),\\
\coord{\Ebar}{x}:&=-D c(x,\Ebar).
\end{align*}
\end{rmk}

\noindent\underline{\bf{Nondegeneracy condition:}}

$c$ satisfies~\eqref{nondeg} if the following linear mapping is invertible for every $x\in\Omega^{\cl}$ and $\xbar\in\Omegabar^{\cl}$:
\begin{equation}
-\hessxy{c(x, \xbar)}: T_{x}\Omega\to T^*_{\xbar}\Omegabar. \tag{Nondeg}
\label{nondeg}\\
\end{equation}

\noindent\underline{\bf{Strong MTW condition:}} 

A cost $c$ satisfies~\eqref{A3s} if there exists some $\delzero>0$ such that 
\begin{equation}\label{A3s}\tag{$\textrm{MTW}_+$}
\MTWcoord{i}{j}{k}{l}(x, \xbar)V^iV^j\eta_k\eta_l\geq \delzero\lvert V\rvert^2\lvert\eta\rvert^2,
\end{equation}
for any $x\in\Omega^{\cl}$, $\xbar\in\Omegabar^{\cl}$, and $V\in T_{x}\Omega$ and $\eta\in T^*_{x}\Omega$ such that $\eta(V)=0$. Here all derivatives are with respect to a fixed coordinate system, regular indices denote derivatives of $c$ with respect to the first variable, while indices with a bar above denote derivatives with respect to the second derivative, and a pair of raised indices denotes the matrix inverse.

Additionally, we need the following concepts of $c$-convexity of a domain, and $c$-convex functions.
%Writing $c_{ij \ldots,\ kl \ldots} = \pdiff{i}\pdiff{j}\ldots\pdiff[y]{k}\pdiff[y]{l}\ldots c$ and indicating the inverse of a matrix by raising its indices, 
\begin{defin}\label{def:c-convex domains}
We say that a set $\E\subseteq\Omega$ is \emph{(strongly) $c$-convex with respect to $\xbar_0\in\Omegabar$} if the set $\coord{\E}{\xbar_0}$ is a (strongly) convex subset of $T^*_{\xbar_0}\Omegabar$. 

Similarly, we say $\Ebar\subseteq\Omegabar$ is \emph{(strongly) $c$-convex with respect to $x_0\in\Omega$} if the set $\coord{\Ebar}{x_0}$ is a (strongly) convex subset of $T^*_{x_0}\Omega$.
%Likewise, $\Ebar\subseteq\Omegabar$ is \emph{$c$-convex with respect to $x_0\in\Omega$} if the set $\coord{\Ebar}{x_0}$ is a convex subset of $T^*_{x_0}\Omega$.
%
%We say that two sets $\E\subseteq\Omega$ and $\Ebar\subseteq\Omegabar$ are \emph{$c$-convex with respect to each other} if $\E$ is $c$-convex with respect to each $\xbar\in \Ebar$, and $\Ebar$ is $c$-convex with respect to each $x\in \E$.
%We say that $\Omega$ is uniformly $c$-convex with respect to $\Omegabar$ if it is $c$-convex and satisfies
%\begin{equation}
%[D_i\nu_j(x)-c^{l, k}c_{ij, l}(x, y)\nu_k(x)]\tau_i\tau_j(x) \geq \delbar\text{, }\forall x \in \ombdry,\ y \in \Omegabar
%\label{unifcconvex}
%\end{equation}
%for some $\delbar > 0$, where $\tau$ is any unit tangent vector to $\ombdry$, and $\nu$ is the outer unit normal to $\ombdry$. Likewise, $\Omegabar$ is uniformly $c $-convex with respect to $\Omega$ if it is $c $-convex and satisfies
%\begin{equation*}
%[D_i\nustar_j(y)-c^{k, l}c_{l, ij}(x, y)\nustar_k(y)]\taustar_i\taustar_j(y) \geq \delstarbar\text{, }\forall y \in \Omegabarbdry,\ x \in \Omega
%\label{unifcstarconvex}
%\end{equation*}
%for some $\delstarbar > 0$, where $\taustar$ is any unit tangent vector to $\Omegabarbdry$, and $\nustar$ is the outer unit normal to $\Omegabarbdry$. 
\end{defin}
\begin{rmk}
For some fixed $\xbar\in\Omegabar$, given any two points $p_1:=- \Dbar c(x_1, \xbar)$ and $p_2:=- \Dbar c(x_2, \xbar)$ with $x_1$, $x_2\in\Omega$, we define the \emph{$c$-segment with respect to $\xbar$ between $x_1$ and $x_2$} as the image of the straight line segment between $p_1$ and $p_2$ under the map $\cExp{\xbar}{\cdot}$. It is clear that $\E$ is $c$-convex with respect to $\xbar\in\Omegabar$ if and only if every $c$-segment with respect to $\xbar$ between any two $x_1$ and $x_2\in\E$ remains inside $\E$. An symmetric statement and definition holds with the roles of $\Omega$ and $\Omegabar$ reversed.
\end{rmk}

\begin{defin}\label{def: c-convex functions}
We say that a function $\phi$ is \emph{$c$-convex}, if for every $x_0\in \Omega$ there exist $\xbar_0\in\Omegabar$ and $\lambda_0 \in \R$ such that 
\begin{align*}
\phi(x_0)&=-c(x_0, \xbar_0)+\lambda_0,\\
\phi(x) &\geq -c(x, \xbar_0)+\lambda_0
\end{align*}
 for all $x\neq x_0$. 
 %We say $\phi$ is \emph{strictly $c$-convex} if the second inequality above is strict.
%
%We will say that $\phi$ is locally $c$-convex, if for each $x_0$ the second inequality holds for some neighborhood around $x_0$, with a similar analogue for strict local $c$-concavity.

We also call such a function $-c(\cdot, \xbar_0)+\lambda_0$ that satisfies the above equality and inequality, a \emph{$c$-support function to $\phi$ at $x_0$}.
\end{defin}

\section{Main Result}\label{section: main results}
We state in this section, the main result of the paper.

Assume that $\mu:=\I\dVol_{\M}$ for some positive, real valued $\I\in C^\infty(\Omega^{\cl})$ which satisfies
\begin{equation*}
\int_\Omega \I(x)\dVol_{\M}(x)=1,
\end{equation*}
while the domains $\Omega$ and $\Omegabar$ are bounded subsets of Riemannian manifolds $\M$ and $\Mbar$ respectively. Additionally, assume the cost function $c$ satisfies conditions~\eqref{A0},~\eqref{twist},~\eqref{nondeg}, and~\eqref{A3s}, fix an integer $K\geq 2$ and $\{\xbar_i\}_{i=1}^{K}\subseteq\Omegabar$, a finite collection of $K$ distinct points, and $\{\alphasub_i\}_{i=1}^K$, a collection of $K$ real numbers satisfying $\sum_{i=1}^K \alphasub_i=1$ and $0<\alphasub_i<1$. At this point, we make the additional assumption that $\Omega$ is $c$-convex with respect to $\{\xbar_i\}_{i=1}^K$ and $\Omegabar$ is $c$-convex with respect to $\Omega$ (however, we do not make any assumptions on the support of $\I$).

The main result we present here is as follows: for any $\epsilon >0$, there is an iterative scheme to find numbers $\alphahat_i>0$ and $\dhat{i}\in \R$ such that 
\begin{equation}\label{eqn: error bound}
\lvert \alphahat_i-\alphasub_i\rvert < \epsilon,\qquad\forall 1\leq i \leq K,
\end{equation}
and $T(x) = \cExp{x}{D\phi(x)}$ is the Monge solution for the cost function $c$ transporting the measure $\mu$ to the measure $\displaystyle\sum_{i=1}^K \alphahat_i \delta_{\xbar_i}$, where
\begin{equation}\label{eqn: algorithm construction}
\phi(x):=\max_{1\leq i \leq K}{[-c(x, \xbar_i)+\dhat{i}]}.
\end{equation}
Additionally, we show that the number of iterations $\nstop$ necessary to find $\{\alphahat_i\}_{i=1}^K$ and $\{\dhat{i}\}_{i=1}^K$ is bounded above:
\begin{equation}\label{eqn: step bound}
\nstop\leq K\left[ \frac{K\maxconst \parameterinterval\maxsource}{\delta \minconst}\left[\max_{1\leq i\leq K}{\surface{\bdry{\Omegacoord{\xbar_i}}}}\right]+1\right],
\end{equation}
where
\begin{align}
\maxconst:&=\max_{1\leq i\neq k\leq K}{\sup_{x\in\Omega}{\frac{\lvert\det{\left(-\hessxy{c(x, \xbar_i)}\right)}\rvert}{\lvert \left(-\hessxy{c(x, \xbar_i)}\right)^{-1}\left(-\D  c(x, \xbar_i)+ \D c(x, \xbar_k)\right)\rvert_{T_{\xbar_i}\Omegabar}}}},\notag\\
\parameterinterval:&=\max_{1<k\leq K}{\sup_{x\in\Omega}{e^{-c(x, \xbar_k)+c(x, \xbar_1)}}}+1,\notag\\
\minconst:&= \min_{1<k\leq K}{\inf_{x\in\Omega}{e^{-c(x, \xbar_k)+c(x, \xbar_1)}}},\notag\\
\delta:&=\min{\left\{\frac{\epsilon}{K-1},\ \frac{\alphasub_1}{K}\right\}}\label{eqn: delta restriction}
\end{align}
and $\surface{\cdot}$ denotes $(\dim-1)$--dimensional Hausdorff measure.
\begin{rmk}\label{rmk: compare to CKO}
The upper bound here has the same dependency on the number of points $K$ in the target and the error $\epsilon$ as the one given in~\cite{CKO99}.
%, the notation of the constants in the bound has been chosen to correspond closely to that used by Caffarelli, Kochengin, and Oliker (the constant $\parameterinterval$ corresponds to the value $\upperd-\lowerd$ used in~\cite{CKO99}).
Note that the bound we give here is on the number of \emph{iterations} (the number of $c$-convex functions $\phi_n$ necessary, see the description in Section~\ref{section: algorithm description} below). However, since there are exactly $K$ intermediate steps per iteration, counting the total number of steps we would multiply by an additional factor of $K$. By the choice of $\delta$, we note that the total number of steps with accuracy $\epsilon$ has an upper bound that is essentially of the order of $K^4/\epsilon$. 
\end{rmk}
\begin{rmk}\label{rmk: A3w possible}
We would like to point out here that it is possible to obtain \emph{some} finite upper bound on $\nstop$ under the degenerate version of~\eqref{A3s} where the constant $\delta_0=0$ (this is the case, for example, with the cost function $c(x, \xbar)=\lvert x-y\rvert^2$ on $\R^n$, which is known to correspond to the classical Monge-Amp{\`e}re equation). However, the difference will be that the higher order terms in the expression of $\GUp{i}(\boldd+\bolddeltad)-\GUp{i}(\boldd)$ may only have order $O(\deltanorm)$ instead of $o(\deltanorm)$ as $\deltanorm\to 0$ (see the proofs of Propositions~\ref{prop: difference of G^k} and~\ref{prop: bound on derivative of G}). As a result, the corresponding terms in the derivative of $\GUp{i}$ may not vanish, but can be bounded above, and the final upper bound may contain $\sum_{i=1}^K{K\choose i}$ more terms.
\end{rmk}

\section{Description of the iterative scheme}\label{section: algorithm description}
In describing the algorithm, we follow most of the notation used by Caffarelli, Kochengin, and Oliker in~\cite{CKO99} to better highlight the parallels between the two schemes.
\begin{defin}\label{def: visibility set}
Let $\phi$ be a $c$-convex function on $\Omega$. Then for any measurable set $\Ebar \subseteq \Omegabar$, we define the \emph{visibility set of $\omega$ associated to $\phi$} by
\begin{align*}
\vis[\phi]{\Ebar}:&=\left\{x\in\Omega\mid\text{there exist }\lambda\in\R\text{ and }y\in\Ebar\text{ such that, }\right.\\
&\qquad\left.-c(\cdot, \xbar)+\lambda\text{ is a }c\text{-support function to }\phi\text{ at }x\right\}.
\end{align*}
We also define 
\begin{equation*}
\G[\phi]{\Ebar}:=\meas{\vis[\phi]{\Ebar}}.
\end{equation*}
\end{defin}
\begin{defin}
Write $\boldd :=(\dsub{1}, \dsub{2}, \ldots, \dsub{K})$ for $\dsub{i}>0$, $1\leq i \leq K$ (we will use the shorthand $\boldd>0$ to denote this from now on). Then we define the following function:
\begin{equation*}
\phid(x):=\max_{1\leq i \leq K}{[-c(x, \xbar_i)-\log{\dsub{i}}]}.
\end{equation*}
\end{defin}
We will generally use superscripts to denote coordinates for the remainder of the paper.

Since we are only concerned with the optimal transportation problem when $\nu$ is a finite sum of delta measures, we make some notational simplifications as follows. For any $1
\leq i\leq K$ we will write
\begin{align*}
\vis{\xbar_i}&:=\vis[\phid]{\left\{\xbar_i\right\}},\\
\GUp{i}(\phi)&:=\G[\phi]{\left\{\xbar_i\right\}},\\
\GUp{i}(\boldd)&:=\G[\phid]{\left\{\xbar_i\right\}},
\end{align*}
for any $c$-convex function $\phi$ and $\boldd>0$, when the collection of points $\{\xbar_i\}_{i=1}^K$ is fixed.

\begin{rmk}\label{rmk: monge solutions from potential}
Since $c$ satisfies conditions~\eqref{A0} and~\eqref{twist}, $\phid$ is differentiable $\dVol_{\M}$--a.e on $\Omega$, and we can define the map
\begin{align*}
T_{\boldd}(x):=\cExp{x}{D\phid(x)}
\end{align*}
 for $\dVol_{\M}$--a.e. $x\in\Omega$. Additionally, we can see that 
 \begin{align*}
T_{\boldd}(x)=\xbar_i 
\end{align*}
for all $x\in\vis{\xbar_i}$, and since $\phid$ is clearly $c$-convex, this implies that $T_{\boldd}$ is the unique Monge solution from the measure $\I\dVol_{\M}$ to $\sum_{i=1}^K \GUp{i}(\boldd)\delta_{\xbar_i}$ (\cite[Chapter 10]{Vil09}). Intuitively the quantities $\GUp{i}(\boldd)$ give the amount of mass in $\Omega$ that is transported by the map $T_{\boldd}$ to the point $\xbar_i$ for each $1\leq i\leq K$.
\end{rmk}

We will make also use of one of the consequences of a theorem proved by Loeper (\cite[Theorem 3.7]{Loe10}).
\begin{thm}[Loeper]\label{thm: Loeper}
If $c$ satisfies~\eqref{A0},~\eqref{twist},~\eqref{nondeg}, and~\eqref{A3s}, and $\Omega$ is $c$-convex with respect to $\Omegabar$, then for any $c$-convex function $\phi$ and $\xbar\in\Omegabar$, $\vis[\phi]{\xbar}$ is $c$-convex with respect to $\xbar$.
\end{thm}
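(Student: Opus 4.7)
The plan is to show that if $x_0, x_1 \in \vis[\phi]{\xbar}$, then every point $x_t$ on the c-segment (with respect to $\xbar$) joining them also lies in $\vis[\phi]{\xbar}$. First I would reformulate the visibility set: the equality and inequality defining a c-support function $-c(\cdot,\xbar)+\lambda$ to $\phi$ at $x$ together force $\lambda = \phi(x) + c(x, \xbar)$ and $\phi(x) + c(x, \xbar) \leq \phi(x') + c(x', \xbar)$ for every $x' \in \Omega$; equivalently, $x$ globally minimizes $\phi(\cdot) + c(\cdot, \xbar)$ over $\Omega$, with $\lambda$ the minimum value. Hence if $x_0, x_1 \in \vis[\phi]{\xbar}$, the common value $\lambda := \phi(x_0) + c(x_0, \xbar) = \phi(x_1) + c(x_1, \xbar)$ is well-defined, and since $\phi(x_t) + c(x_t, \xbar) \geq \lambda$ is automatic, it suffices to prove the reverse inequality. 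Note that $x_t \in \Omega$ by the hypothesis that $\Omega$ is c-convex with respect to $\Omegabar$.

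The main tool is Loeper's maximum principle, the principal geometric consequence of \eqref{A3s} (its weaker version actually suffices). In the symmetric form adapted to c-segments in the source variable, it asserts that under the present hypotheses,
\begin{equation*}
c(x_t, \xbar) - c(x_t, \bar{y}) \leq \max_{i \in \{0,1\}}\bigl[c(x_i, \xbar) - c(x_i, \bar{y})\bigr]
\end{equation*}
for every $\bar{y} \in \Omegabar$. To exploit this I would pick a c-support function $-c(\cdot, \bar{y}) + \mu$ to $\phi$ at $x_t$, which exists by Definition~\ref{def: c-convex functions}. Evaluating the support inequality at $x_i$ gives $\phi(x_t) - \phi(x_i) \leq c(x_i, \bar{y}) - c(x_t, \bar{y})$ for $i = 0, 1$, and for the index $j$ attaining the maximum above one has $c(x_t, \xbar) - c(x_j, \xbar) \leq c(x_t, \bar{y}) - c(x_j, \bar{y})$. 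The two bounds carry opposite signs of $c(x_t, \bar{y}) - c(x_j, \bar{y})$, so summing them yields
\begin{equation*}
\phi(x_t) + c(x_t, \xbar) - \lambda = \bigl(\phi(x_t) - \phi(x_j)\bigr) + \bigl(c(x_t, \xbar) - c(x_j, \xbar)\bigr) \leq 0,
\end{equation*}
which is the needed inequality, so $x_t \in \vis[\phi]{\xbar}$.

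The main obstacle is Loeper's maximum principle itself, which is the nontrivial geometric heart of the theory built from \eqref{A3s}. Its proof differentiates a quantity of the form $t \mapsto -c(x, \bar{y}_t) + c(x_0, \bar{y}_t)$ twice along a c-segment; the MTW tensor appears precisely as the coefficient in the second-derivative expression when paired with the relevant cotangent vector, and the sign assumption in \eqref{A3s}, combined with the c-convexity hypothesis relating $\Omega$ and $\Omegabar$, rules out an interior maximum of the corresponding function. In the present context this deep result is taken as a black box, and the deduction above of c-convexity of the visibility set is then a short consequence.
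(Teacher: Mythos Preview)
The paper does not actually prove this theorem; it is stated without proof as a direct citation of Loeper's result (\cite[Theorem~3.7]{Loe10}). Your proposal therefore goes beyond what the paper does, and the argument you sketch is the standard and correct deduction of $c$-convexity of visibility sets from Loeper's maximum principle: reformulate $\vis[\phi]{\xbar}$ as the minimizing set of $\phi+c(\cdot,\xbar)$, pick a $c$-support at the intermediate point $x_t$, and combine the support inequality with the maximum principle to close the estimate.

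One point worth making explicit: the inequality you invoke,
\[
c(x_t,\xbar)-c(x_t,\bar y)\le \max_{i\in\{0,1\}}\bigl[c(x_i,\xbar)-c(x_i,\bar y)\bigr]
\]
along a $c$-segment $t\mapsto x_t$ with respect to $\xbar$, is the \emph{dual} form of Loeper's maximum principle (the $c$-segment is taken in the source variable rather than the target). Strictly speaking this is the maximum principle for the cost $c^*(\xbar,x):=c(x,\xbar)$, and so requires \eqref{A3s} for $c^*$. This is equivalent to \eqref{A3s} for $c$ by the well-known symmetry of the cross-curvature (MTW) tensor under interchange of source and target (see, e.g., Kim--McCann), so there is no gap, but since the paper only states \eqref{A3s} for $c$ it is worth flagging why the source-side version is available.
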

%
%In particular, since $-\D c(\cdot, \xbar)$ is a diffeomorphism for each $\xbar\in\Omegabar$, we see that each $\vis[\phi]{\xbar}$ is an $\dim$--dimensional set, and its boundary is a piecewise smooth $(\dim-1)$--dimensional surface.

Before describing the iterative scheme, we need to prove a monotonicity property of the functions $\GUp{i}$. To do so, we first give an alternate characterization of the sets $\vis{\xbar_i}$.

\begin{lemma}\label{lemma: characterization of vis sets}
Suppose $c$, $\mu$, $\Omega$, and $\Omegabar$ satisfy all of the conditions of Section~\ref{section: main results}, then for any $1\leq i\leq K$
\begin{equation*}
\vis{\xbar_i}=\V_{\boldd, i}
\end{equation*}
if $\V_{\boldd, i}\neq\emptyset$, where
\begin{equation*}
\V_{\boldd, i}:=\left\{x\in\Omega\mid -c(x, \xbar_i)-\log{\dsub{i}}\geq -c(x, \xbar_k)-\log{\dsub{k}},\ \forall1\leq k\leq K\right\}.
\end{equation*}

Otherwise, 
\begin{equation*}
\meas{\vis{\xbar_i}}=0.
\end{equation*}
\end{lemma}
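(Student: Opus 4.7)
The plan is to reformulate membership in $\vis{\xbar_i}$ as a minimization statement for the auxiliary function $\psi(y) := \phid(y) + c(y, \xbar_i)$ on $\Omega$. Unpacking Definitions~\ref{def: c-convex functions} and~\ref{def: visibility set}, a point $x$ lies in $\vis{\xbar_i}$ exactly when there is some $\lambda \in \R$ with $\psi(x) = \lambda$ and $\psi(y) \geq \lambda$ for every $y \in \Omega$. This forces $\lambda = \inf_\Omega \psi$, so $\vis{\xbar_i}$ coincides with the set of minimizers of $\psi$ on $\Omega$ (which may well be empty). Since $\psi(y) \geq -\log \dsub{i}$ everywhere by taking $k=i$ in the max defining $\phid$, with equality precisely on $\V_{\boldd, i}$, the case $\V_{\boldd, i} \neq \emptyset$ is immediate: the infimum then equals $-\log \dsub{i}$ and is attained exactly on $\V_{\boldd, i}$, yielding $\vis{\xbar_i} = \V_{\boldd, i}$.

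The delicate case is $\V_{\boldd, i} = \emptyset$. Here the strict inequality $\psi(y) > -\log \dsub{i}$ holds on all of $\Omega$, so at any $x \in \vis{\xbar_i}$ the $i$-th term in the max defining $\phid(x)$ is inactive, and there exists some $k \neq i$ with
\begin{equation*}
\phid(x) = -c(x, \xbar_k) - \log \dsub{k} = -c(x, \xbar_i) + \lambda,
\end{equation*}
where $\lambda = \inf_\Omega \psi$ is a fixed real number. This places $x$ on the level set $\{y \in \Omega : c(y, \xbar_k) - c(y, \xbar_i) = -\log \dsub{k} - \lambda\}$, so $\vis{\xbar_i}$ is contained in the finite union over $k \neq i$ of such level sets.

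The main obstacle is ruling out the possibility that these level sets carry positive mass. This is where the twist condition~\eqref{twist} enters: for each fixed $x \in \omclose$ the map $\xbar \mapsto -\D c(x, \xbar)$ is injective, so the distinct target points $\xbar_i \neq \xbar_k$ yield $\D c(x, \xbar_k) \neq \D c(x, \xbar_i)$ at every $x \in \omclose$. Thus $c(\cdot, \xbar_k) - c(\cdot, \xbar_i)$ is a $C^4$ function with nowhere-vanishing differential on $\omclose$, and the implicit function theorem makes each of its level sets a $C^4$ submanifold of codimension one, hence of zero $\dVol_\M$ measure. Absolute continuity of $\mu = \I \dVol_\M$ with respect to $\dVol_\M$ then delivers $\meas{\vis{\xbar_i}} = 0$, finishing the plan.
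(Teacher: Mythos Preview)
Your argument is correct. The reformulation of $\vis{\xbar_i}$ as the set of minimizers of $\psi=\phid+c(\cdot,\xbar_i)$ is a clean way to handle the first case, and is logically equivalent to what the paper does there by pinning down $\lambda=-\log\dsub{i}$.

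In the case $\V_{\boldd,i}=\emptyset$ you take a slightly different route from the paper. The paper argues pointwise via differentiability: at any $x_0\in\vis{\xbar_i}$ where $\phid$ is differentiable, the $c$-support at $\xbar_i$ forces $D\phid(x_0)=-Dc(x_0,\xbar_i)$, while the active index $j_0\neq i$ in the max forces $D\phid(x_0)=-Dc(x_0,\xbar_{j_0})$, contradicting~\eqref{twist}; hence $\vis{\xbar_i}$ lies in the nondifferentiability set of $\phid$, which is $\dVol_\M$-null. You instead observe that every $x\in\vis{\xbar_i}$ lies on one of finitely many level sets $\{c(\cdot,\xbar_k)-c(\cdot,\xbar_i)=-\log\dsub{k}-\lambda\}$, $k\neq i$, and use~\eqref{twist} to make each level set a $C^1$ hypersurface of $\dVol_\M$-measure zero. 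Both arguments use~\eqref{twist} in exactly the same way (namely $Dc(\cdot,\xbar_i)\neq Dc(\cdot,\xbar_k)$ pointwise), but yours avoids any appeal to almost-everywhere differentiability of $\phid$, trading it for the implicit function theorem. Either way the conclusion is immediate from absolute continuity of $\mu$.
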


\begin{proof}
Note that for any choice of $\boldd$, $\phid$ is a $c$-convex function on $\Omega$. It is clear from the definition of $\phid$ that, at any point in $\V_{\boldd, i}$ there exists a $c$-support function of the form $-c(\cdot, \xbar_i)+\lambda$ supporting to $\phid$ at that point, thus we obtain
\begin{equation*}
\V_{\boldd, i}\subseteq \vis{\xbar_i}.
\end{equation*}
%
%Suppose $x_0\in \V_{\boldd, i}$. Then it is clear from the definition of $\phid$ that $-c(\cdot, \xbar_i)-\log{\dsub{i}}$, is a $c$-support function to $\phid$ at $x_0$, thus we obtain
%\begin{equation*}
%\V_{\boldd, i}\subseteq \vis{\xbar_i}.
%\end{equation*}

If $\vis{\xbar_i}=\emptyset$, the claims of the lemma immediately follow, so let us suppose there exists some $x_0\in\vis{\xbar_i}$. Then there exists some $\lambda\in \R$ such that 
\begin{align*}
-c(x_0, \xbar_i)+\lambda &= \phid(x_0)\\
-c(x, \xbar_i)+\lambda&\leq \phid(x)
\end{align*}
for all $x\in\Omega$. In particular by the definition of $\phid$, 
\begin{align*}
-c(x_0, \xbar_i)+\lambda=\phid(x_0)&\geq -c(x_0, \xbar_i)-\log{\dsub{i}}\\
\implies\lambda&\geq-\log{\dsub{i}}.
\end{align*}

If there exists a point $x_1\in\V_{\boldd, i}$, we have from the definition of $\phid$ and $\V_{\boldd, i}$ that
\begin{align*}
-c(x_1, \xbar_i)+\lambda&\leq\phid(x_1)=-c(x_1, \xbar_i)-\log{\dsub{i}}\\
\implies \lambda&\leq -\log{\dsub{i}},
\end{align*}
hence $\lambda=-\log{\dsub{i}}$ and we obtain $x_0\in\V_{\boldd, i}$.

Now suppose $\V_{\boldd, i}=\emptyset$. Assume first that $\phid$ is differentiable at $x_0$. By the definition of $\phid$, this implies that for some $1\leq j_0\leq k$, $i\neq j_0$ we have
\begin{align*}
 \phid(x_0)&=-c(x_0, \xbar_{j_0})-\log{\dsub{j_0}},\\
  \phid(x)&\geq-c(x, \xbar_{j_0})-\log{\dsub{j_0}},\qquad\forall\;x\neq x_0
\end{align*}
and in particular
\begin{align*}
\D \phid(x_0)=-\D c(x_0, \xbar_{j_0}).
\end{align*}
At the same time, since $x_0\in \vis{\xbar_i}$, we must have 
\begin{align*}
\D \phid(x_0)=-\D c(x_0, \xbar_{i}).
\end{align*}
which contradicts~\eqref{twist}. However, since $\phid$ is differentiable $\dVol_{\M}$ almost everywhere (Remark~\ref{rmk: monge solutions from potential}), and $\mu$ is absolutely continuous with respect to $\dVol_{M}$, we obtain $\meas{\vis{\xbar_i}}=0$ as desired.
\end{proof}
%
%\begin{rmk}\label{rmk: mono of vis sets}
%If all $\dsub{k}$ are kept fixed for $k\neq i$, we can see that the sets $\V_{\boldd, i}$ are increasing as $\dsub{i}$ decreases. Hence we immediately obtain the following monotonicity property for $\dsub{i} > \dsub$:
%\begin{equation*}
%\vis[\dsub{1}, \ldots, \dsub{i}, \ldots, \dsub{K}]{\xbar_i}\subseteq\vis[\dsub{1}, \ldots, \dsub, \ldots, \dsub{K}]{\xbar_i}\cup N_\boldd
%\end{equation*}
%where $N_\boldd$ is a set of zero $\mu$--measure.
%
%Similarly, if we take an index $j\neq i$, we obtain for when $\dsub{i}>\dsub$:
%\begin{equation*}
%\vis[\dsub{1}, \ldots, \dsub{i}, \ldots, \dsub{K}]{\xbar_j}\cup N_\boldd \supseteq\vis[\dsub{1}, \ldots, \dsub, \ldots, \dsub{K}]{\xbar_j}.
%\end{equation*}
%\end{rmk}
Now we can easily see the following corollary.% (recall that $\G{i}=\meas{\vis{i}}$).
\begin{cor}\label{cor: monotonicity of G^k}
Suppose $c$, $\mu$, $\Omega$, and $\Omegabar$ satisfy all of the conditions of Section~\ref{section: main results}. Also, fix an index $1\leq i\leq K$, another index $j\neq i$, and values $\dsub{k}>0$ for all $k\neq i$. Then, $\GUp{i}(\boldd)$ is decreasing in $\dsub{i}$, $\GUp{j}(\boldd)$ is increasing in $\dsub{i}$, and we have the following limits:
\begin{align}
%\lim_{\dsub{i}\to \infty}\GUp{i}(\boldd)&=0,\label{eqn: G^i shrinks}\\
\lim_{\dsub{i}\to 0}\GUp{i}(\boldd)&=1,\label{eqn: G^i grows}\\
\lim_{\dsub{i}\to 0}\GUp{j}(\boldd)&=0\label{eqn: G^j shrinks}.
\end{align}
\end{cor}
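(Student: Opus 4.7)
The plan is to reduce everything to elementary observations about how the explicit sets $\V_{\boldd, i}$ defined in Lemma~\ref{lemma: characterization of vis sets} vary with the parameter $\dsub{i}$, and then transfer the information to $\GUp{i}(\boldd) = \meas{\vis{\xbar_i}}$ via that lemma.

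First, I will fix $\dsub{k}$ for all $k\neq i$ and treat $\dsub{i}$ as the only moving parameter. Since $-\log \dsub{i}$ is strictly decreasing in $\dsub{i}$, the defining inequality $-c(x, \xbar_i) - \log \dsub{i} \geq -c(x, \xbar_k)-\log \dsub{k}$ becomes strictly harder to satisfy as $\dsub{i}$ grows, so the set $\V_{\boldd, i}$ is nonincreasing (as a subset of $\Omega$) in $\dsub{i}$. Symmetrically, for any $j\neq i$ the single constraint $-c(x, \xbar_j)-\log \dsub{j} \geq -c(x, \xbar_i)-\log \dsub{i}$ in the definition of $\V_{\boldd, j}$ becomes weaker as $\dsub{i}$ grows, while every other constraint is unaffected; hence $\V_{\boldd, j}$ is nondecreasing in $\dsub{i}$. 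Combining monotonicity of $\mu$ under set inclusion with Lemma~\ref{lemma: characterization of vis sets}, I obtain the monotonicity of $\GUp{i}(\boldd)$ and $\GUp{j}(\boldd)$ claimed in the corollary. The only mild care needed here is to note that if $\V_{\boldd, i}$ happens to be empty for some value of $\dsub{i}$, the alternative conclusion of Lemma~\ref{lemma: characterization of vis sets} gives $\GUp{i}(\boldd)=0$, and the set containment argument still delivers monotonicity (which is vacuously consistent since $\GUp{i}(\boldd)$ cannot become negative).

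For the limit statements as $\dsub{i} \to 0$, I will use that~\eqref{A0} implies $c$ is bounded on the compact set $\omclose\times\Omegabarclose$. Thus there is a constant $M$ (depending only on $c$ and the fixed points $\xbar_k$) such that $|c(x,\xbar_k)-c(x,\xbar_i)|\leq M$ for every $x\in\Omega$ and every $k$. For $\dsub{i}$ small enough that $-\log \dsub{i}$ exceeds $M + \max_{k\neq i} |\log \dsub{k}|$, the defining inequality of $\V_{\boldd, i}$ holds simultaneously for every $k$ and every $x\in\Omega$, so $\V_{\boldd, i}=\Omega$, giving $\GUp{i}(\boldd)=\mu(\Omega)=1$. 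Simultaneously, for each $j\neq i$, the reversed inequality used to define $\V_{\boldd, j}$ fails uniformly on $\Omega$ for the same range of $\dsub{i}$, so $\V_{\boldd, j}=\emptyset$, and the second alternative of Lemma~\ref{lemma: characterization of vis sets} yields $\GUp{j}(\boldd)=0$.

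There is no significant obstacle here, as Lemma~\ref{lemma: characterization of vis sets} has already done the real work of identifying the visibility sets with explicit sublevel-type sets. The only point requiring attention is bookkeeping around the degenerate case $\V_{\boldd, i}=\emptyset$, to make sure one invokes the measure-zero alternative from the lemma rather than an erroneous set identity; once that is handled, the monotonicity and the two limits follow from elementary properties of the logarithm together with the uniform boundedness of $c$ on $\omclose\times\Omegabarclose$.
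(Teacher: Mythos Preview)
Your proposal is correct and follows essentially the same approach as the paper: both arguments use Lemma~\ref{lemma: characterization of vis sets} to reduce to the explicit sets $\V_{\boldd,i}$, observe their monotone dependence on $\dsub{i}$ via the logarithm, and invoke boundedness of $c$ on $\omclose\times\Omegabarclose$ to show $\V_{\boldd,i}=\Omega$ and $\V_{\boldd,j}=\emptyset$ for $\dsub{i}$ sufficiently small. Your treatment of the degenerate case $\V_{\boldd,i}=\emptyset$ is slightly more explicit than the paper's, but the substance is the same.
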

\begin{proof}
If all $\dsub{k}$ are fixed for $k\neq i$, we can easily see that the sets $\V_{\boldd, i}$ are increasing as $\dsub{i}$ decreases. Hence by Lemma~\ref{lemma: characterization of vis sets}
 above we immediately obtain the following monotonicity property for $\dsub[1]{i} < \dsub[2]{i}$:
\begin{equation*}
\meas{\vis[{\dsub{1}, \ldots, \dsub[1]{i}, \ldots, \dsub{K}}]{\xbar_i}}\geq\meas{\vis[{\dsub{1}, \ldots, \dsub[2]{i}, \ldots, \dsub{K}}]{\xbar_i}}.
\end{equation*}
%where $N_\boldd$ is a set of zero $\mu$--measure.
Similarly, since $j\neq i$, we obtain when $\dsub[1]{i}<\dsub[2]{i}$:
\begin{equation*}
\meas{\vis[{\dsub{1}, \ldots, \dsub[1]{i}, \ldots, \dsub{K}}]{\xbar_j}} \leq\meas{\vis[{\dsub{1}, \ldots, \dsub[2]{i}, \ldots, \dsub{K}}]{\xbar_j}}.
\end{equation*}
Then recalling that $\GUp{i}(\boldd)=\meas{\vis{\xbar_i}}$, the claims of monotonicity are immediate.

To obtain the limiting values, note that from the boundedness of $c$ and the definition of $\phid$, if all $\dsub{k}$ are kept fixed for $k\neq i$, we will have $\V_{\boldd, i}=\Omega$, while $\V_{\boldd, j}=\emptyset$ for $\dsub{i}>0$ sufficiently small. Then, again by Lemma~\ref{lemma: characterization of vis sets} above, the limits~\eqref{eqn: G^i grows} and~\eqref{eqn: G^j shrinks} are immediate.
\end{proof}

With this monotonicity property in hand, we are ready to describe the iterative scheme in detail.

If $K=1$, we have that $-c(\cdot, \xbar_1)+\dhat{}$ gives rise to an optimal solution from $\mu$ to $\delta_{\xbar_1}$ for any choice of $\dhat{}\in\R$ (the associated mapping is simply $T(x)\equiv \xbar_1$). If $K=2$, we let
\begin{align*}
 \phi(x):=\max{\{-c(x, \xbar_1),\ -c(x, \xbar_2)+\dhat{}\}}
\end{align*}
then adjust $\dhat{}$ continuously until $\alphahat_1:=\GUp{1}(\boldd)$ and $\alphahat_2:=\GUp{2}(\boldd)$ satisfy the desired bounds. Thus we assume that $K\geq 3$.
 
The algorithm now consists of starting with a $c$-convex function $\phi_0:=\phid[\boldd^0]$ for an appropriate choice of $\boldd^0$, then decreasing each of the parameters $\dsub{k}$ in turn until the desired mapping is found.

First, (after fixing $\epsilon >0$) we define $\delta>0$ by
\begin{equation*}
\delta:=\min{\left\{\frac{\epsilon}{K-1},\ \frac{\alphasub_1}{K}\right\}}
\end{equation*}
(the second restriction will play a role in showing the upper bound on the number of steps necessary, see Section~\ref{section: upper bound on steps}). Then, define the set $\phidel$ by
\begin{equation*}\label{def: phidel}
\phidel :=\left\{\phid\mid \boldd>0,\ \GUp{i}(\boldd)\leq \alphasub_i+\delta\text{ for all }2\leq i\leq K \right\}.
\end{equation*}
Since $\alphasub_i+\delta> 0$ for each $1\leq i\leq K$, by the limiting value~\eqref{eqn: G^j shrinks} in Corollary~\ref{cor: monotonicity of G^k} we see that $\phi_{\boldd}\in\phidel$ whenever  $\dsub{1}>0$ is sufficiently small, in particular $\phidel\neq\emptyset$. Take any element of $\phidel$ and let it be $\phi_0$. We now construct a sequence of $c$-convex functions $\phi_n\in\phidel$ as follows.

Suppose we have $\phi_n\in\phidel$, we construct $\phi_{n+1}\in\phidel$ by first constructing a sequence of $K$ intermediate $c$-convex functions. Let $\phi_{n, 1}:=\phi_n$. Then, for any $1\leq i\leq K-1$, suppose
\begin{align*}
\phi_{n, i}=\phi_{(\dsub[{n, i}]{1}, \dsub[{n, i}]{2}, \ldots, \dsub[{n, i}]{K})}\in\phidel.
\end{align*}
If $\lvert \GUp{i+1}(\phi_{n, i})-\alphasub_{i+1}\rvert < \delta$ we simply set $\phi_{n, i+1}:=\phi_{n, i}$. Otherwise, since $\phi_{n, i}\in\phidel$ and $i+1\geq 2$, we must have
\begin{align*}
\GUp{i+1}(\phi_{n, i})\leq \alphasub_{i+1}-\delta.
\end{align*}
Now, $\GUp{i+1}(\boldd)$ is continuous in $\dsub{i+1}$ by Proposition~\ref{prop: bound on derivative of G} in Section~\ref{section: derivative of G}, and has the monotonicity property described in Corollary~\ref{cor: monotonicity of G^k}. Since $\alphasub_{i+1}<1$, and we have the limit~\eqref{eqn: G^i grows}, we can find a $0<\dbar[{n, i}]{i+1}<\dsub[{n, i}]{i+1}$ such that 
\begin{align*}
\GUp{i+1}({\dsub[{n, i}]{1}, \dsub[{n, i}]{2}, \ldots, \dbar[{n, i}]{i+1}, \ldots, \dsub[{n, i}]{K}})\in (\alphasub_{i+1}, \alphasub_{i+1}+\delta).
\end{align*}
Taking $\phi_{n, i+1}:=\phi_{(\dsub[{n, i}]{1}, \dsub[{n, i}]{2}, \ldots, \dbar[{n, i}]{i+1}, \ldots, \dsub[{n, i}]{K})}$, we see by Corollary~\ref{cor: monotonicity of G^k} again that $\phi_{n, i+1}\in\phidel$. We continue in this manner for each $1\leq i\leq K-1$ until we determine $\phi_{n+1}:=\phi_{n, K}$.

If it happens that on the $\nstop$ iteration we have 
\begin{align*}
\phi_{\nstop}=\phi_{\nstop, 1}=\ldots=\phi_{\nstop, K}=\phi_{\nstop+1},
\end{align*}
this would imply that 
 \begin{align*}
 \lvert \GUp{i}(\phi_\nstop)-\alphasub_{i}\rvert < \delta\leq\frac{\epsilon}{K-1}<\epsilon
 \end{align*}
  for all $2\leq i\leq K$ by the choice of $\delta$. At the same time, 
\begin{align*}
\lvert\GUp{1}(\phi_\nstop)-\alphasub_1\rvert&=\lvert 1-\sum_{i=2}^K\GUp{i}(\phi_\nstop)-1+\sum_{i=2}^K\alphasub_i\rvert\\
&< (K-1)\delta\leq \epsilon.
\end{align*}
Thus we can see that if we let 
\begin{align*}
 \alphahat_i&:=\GUp{i}(\phi_{\nstop}),\\
 \dhat{i}&:=-\log{\dsub[\nstop]{i}},
\end{align*}
the construction~\eqref{eqn: algorithm construction} gives us exactly the $c$-convex function $\phi_\nstop$, while the $\alphahat_i$ satisfy the desired bound~\eqref{eqn: error bound}, and Remark~\ref{rmk: monge solutions from potential} implies that $\phi_\nstop$ gives rise to the  Monge solution with the desired properties. Notice that the value of $\dsub{1}$ remains fixed throughout the algorithm.

Now, it is \emph{a priori} possible that this scheme may continue for an infinite number of iterations. The following two sections are devoted to showing that this is not the case, and showing the upper bound~\eqref{eqn: step bound} on the number of iterations.

\section{Derivative of the map $G$}\label{section: derivative of G}
We now show that $\GUp{i}$ is differentiable in the $i$th variable, and obtain an upper bound for this partial derivative. This bound will be crucial in showing the desired upper bound~\eqref{eqn: step bound}.

Throughout this section, we will fix $\boldd>0$ and one particular index $1\leq i \leq K$. Now define 
\begin{align*}
\bolddeltad &:=(\deltad{1}, \ldots, \deltad{K})\in\R^K,\\
\deltanorm&:= \max_{1\leq k \leq K}{\lvert\deltad{k}\rvert},
\end{align*}
with $\deltad{i}=0$. It will be implicitly assumed that $\dsub{k}+\deltad{k}>0$ for each $1\leq k\leq K$.

We will also write for any $1\leq j\leq K$,
\begin{align*}
\W_{i, j, \boldd}=\W_j&:=\left\{x\in\Omega\mid -c(x, \xbar_j)-\log{\dsub{j}}\leq -c(x, \xbar_i)-\log{\dsub{i}}\right\},\\
\Wdel_{i, j, \boldd}=\Wdel_j&:=\left\{x\in\Omega\mid  -c(x, \xbar_j)-\log{(\dsub{j}+\deltad{j})\leq-c(x, \xbar_i)-\log{\dsub{i}}}\right\}
%\phibigdel&:=\phid[\boldd+\bolddeltad]
\end{align*}
\begin{rmk}\label{rmk: \W_j are c-convex}
Note that $\W_j$ is the visibility set $\vis[\phi]{\xbar_i}$ for the $c$-convex function $\phi(x) := \sup{\left\{-c(x, \xbar_i)-\log{\dsub{i}},\ -c(x, \xbar_j)-\log{\dsub{j}}\right\}}$. Hence, by Theorem~\ref{thm: Loeper} we see that for any $1\leq j\leq K$, $\W_j$ is $c$-convex with respect to $\xbar_i$.

We define these sets here because when we consider various difference quotients of $\GUp{i}$, we will obtain intersections of sets of the form $\Wdel_j\setminus\W_j$ with different indices.
\end{rmk}

First, a technical lemma. The author believes this is a well known fact, but in the interest of completeness a proof is provided here.
\begin{lemma}\label{lemma: monotonicity of surface measure of convex sets}
If $A\subseteq B$ are both bounded, convex sets in $\R^\dim$, then 
\begin{equation*}
\surface{\bdry{A}}\leq\surface{\bdry{B}}
\end{equation*}
where $\surface{\cdot}$ is $(\dim-1)$--dimensional Hausdorff measure.
\end{lemma}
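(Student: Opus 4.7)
The plan is to use the nearest-point projection onto the smaller convex set as a $1$-Lipschitz map of $\partial B$ onto $\partial A$, and conclude via the general fact that Hausdorff measure does not increase under Lipschitz maps.

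First I would reduce to the case in which both $A$ and $B$ are closed: the topological boundary of a set coincides with that of its closure, so $\surface{\bdry{A}}$ is unchanged by replacing $A$ with $\bar{A}$, and $\bar{A}\subseteq\bar{B}$ still holds. Once $A$ is closed and convex in $\R^\dim$, the nearest-point projection
\begin{equation*}
\pi_A:\R^\dim\to A,\qquad \pi_A(y):=\operatorname*{argmin}_{z\in A}\norm{y-z}
\end{equation*}
is well defined and $1$-Lipschitz; this is the classical projection onto a closed convex set in a Hilbert space, so I would simply cite it.

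The key geometric step is to verify that $\pi_A\lparen\bdry{B}\rparen=\bdry{A}$. The inclusion $\pi_A\lparen\bdry{B}\rparen\subseteq\bdry{A}$ is not needed, so only surjectivity matters. Given $x\in\bdry{A}$, choose an outward unit supporting normal $n$ to $A$ at $x$ (which exists since $A$ is convex and closed), and consider the ray $\{x+tn\mid t\geq 0\}$. Because $B$ is bounded and contains $x$, this ray meets $\bdry{B}$ at some point $y=x+t_0 n$ with $t_0\geq 0$. For any $z\in A$ the supporting hyperplane property gives $\inner{z-x}{n}\leq 0$, so an elementary expansion yields
\begin{equation*}
\norm{y-z}^2=t_0^2+2t_0\inner{n}{x-z}+\norm{x-z}^2\geq t_0^2=\norm{y-x}^2,
\end{equation*}
which shows $\pi_A(y)=x$. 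Hence every boundary point of $A$ has a preimage in $\bdry{B}$.

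Finally, since $\pi_A$ is $1$-Lipschitz and maps $\bdry{B}$ onto $\bdry{A}$, the standard behavior of Hausdorff measure under Lipschitz maps gives
\begin{equation*}
\surface{\bdry{A}}=\surface{\pi_A(\bdry{B})}\leq\surface{\bdry{B}}.
\end{equation*}
I expect the main (and really only) obstacle to be carefully justifying the surjectivity $\pi_A\lparen\bdry{B}\rparen=\bdry{A}$ in degenerate cases, for example when $A$ has empty interior; the supporting-hyperplane argument still works there because $n$ may be taken normal to the affine hull of $A$, but this edge case deserves a brief comment.
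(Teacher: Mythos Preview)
Your argument is correct, and it is essentially the dual of the paper's. The paper builds a map $\Psi:\partial A\setminus N\to\partial B$ by pushing each boundary point $p$ of $A$ outward along its (a.e.\ unique) unit normal $v(p)$ until it hits $\partial B$, then verifies by hand that $\Psi$ is injective and distance-nondecreasing; the conclusion follows since $\Psi^{-1}$ is then $1$-Lipschitz on its image. You instead pull back via the nearest-point projection $\pi_A:\partial B\to\partial A$, cite the standard fact that $\pi_A$ is $1$-Lipschitz, and check surjectivity by exactly the same normal-ray construction---so your $\pi_A$ is the inverse of the paper's $\Psi$ on $\Psi(\partial A\setminus N)$. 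What your route buys is that the contraction property comes for free from the Hilbert-space projection theorem rather than from an ad hoc Cauchy--Schwarz computation, and you avoid having to excise the measure-zero set $N$ where the outward normal to $A$ is nonunique, since any supporting normal suffices for surjectivity. One cosmetic point: in your final display you write $\surface{\partial A}=\surface{\pi_A(\partial B)}$, but since you explicitly said only surjectivity matters, that ``$=$'' should be ``$\leq$'' (or else add the one-line observation that $\pi_A(\partial B)\subseteq\partial A$, which is also true).
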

\begin{proof}
First, if the affine dimension of $A$ is strictly less than $\dim-1$ then $\surface{\bdry{A}}=0$ and the claim is immediate. If the affine dimension of $A$ is $\dim$, since $A$ is convex, for each $p\in \partial A\setminus N$ there is a unique unit vector $v(p)$ such that $\inner{p'-p}{v(p)}\leq 0$ for all $p'\in A$, where $N\subset \partial A$ satisfies $\surface{N}=0$. If the affine dimension of $A$ is $\dim-1$, we fix one of the unit vectors that is normal to the $\dim-1$ dimensional affine hull of $A$, and choose $v(p)$ to be that vector (in this situation, $N=\emptyset$). In both cases, since $A\subseteq B$ and $B$ is convex and compact, if we define $\lambda(p):=\sup{\{\lambda\geq 0\mid p+\lambda v(p)\in B\}}$, we see that $\lambda(p)$ is finite and $p+\lambda(p)v(p)\in\partial B$. Thus, 
\begin{equation*}
\Psi(p):=p+\lambda(p)v(p)
\end{equation*} 
is a well-defined map from $\partial A\setminus N$ to $\partial B$.

We now claim that $\Psi$ is injective on $\partial A\setminus N$, and also
\begin{equation}\label{Psi is an expansion}
\lvert \Psi(p_1)-\Psi(p_2)\rvert \geq \lvert p_1-p_2\rvert
\end{equation}
for any $p_1$, $p_2\in\partial A\setminus N$. Since $\surface{N}=0$, by the definition of Hausdorff measure we see that this would prove the lemma.

To show injectivity, suppose $p_1\neq p_2$ are both in $\partial A\setminus N$ but 
\begin{equation*}
\Psi(p_1)=\Psi(p_2)=:q_0.
\end{equation*}
Then, 
\begin{align*}
\lvert p_2-q_0\rvert ^2&=\lvert p_2-\Psi(p_1)\rvert ^2\\
&=\lvert p_2-(p_1+\lambda(p_1)v(p_1))\rvert^2\\
&=\lvert p_2-p_1\rvert^2-2\lambda(p_1)\inner{p_2-p_1}{v(p_1)}+\lambda(p_1)^2\\
&>\lambda(p_1)^2\\
&=\lvert p_1-\Psi(p_1)\rvert^2\\
&=\lvert p_1-q_0\rvert^2.
\end{align*}
However, by reversing the roles of $p_1$ and $p_2$ above we obtain the opposite strict inequality, hence $\Psi$ must be injective.

Now to prove the expansion property, first note that for any $p_1$, $p_2\in\partial A\setminus N$, 
\begin{align*}
\inner{p_2-p_1}{\Psi(p_1)-p_1}&=\lambda(p_1)\inner{p_2-p_1}{v(p_1)}\\
&\leq 0
\end{align*}
and similarly
\begin{equation*}
\inner{p_1-p_2}{\Psi(p_2)-p_2}\leq 0.
\end{equation*}
By adding these two equations together, we obtain
\begin{align*}
0&\geq \inner{p_2-p_1}{\Psi(p_1)-p_1}+\inner{p_2-p_1}{p_2-\Psi(p_2)}\\
&=\inner{p_2-p_1}{\Psi(p_1)-\Psi(p_2)}+\lvert p_2-p_1\rvert^2,
\end{align*}
and by applying the Cauchy-Schwarz inequality,
\begin{align*}
\lvert p_2-p_1\rvert^2&\leq \inner{p_2-p_1}{\Psi(p_2)-\Psi(p_1)}\\
&\leq \lvert p_2-p_1\rvert\lvert\Psi(p_2)-\Psi(p_1)\rvert
\end{align*}
which readily implies inequality~\eqref{Psi is an expansion}.
\end{proof}

We now prove a key lemma, utilizing the~\eqref{A3s} condition. Here, we are exploiting the strong convexity of sublevelsets of $c$-functions under~\eqref{A3s}, which was proven originally by Loeper. Below, $\distM{\cdot}{\cdot}$ and $\distMbar{\cdot}{\cdot}$ are the geodesic distances given by the Riemannian metrics on $\M$ and $\Mbar$ respectively. 
\begin{lemma}\label{lemma: Wj are only tangential at isolated points}
Suppose $\mu$, $c$, $\Omega$, and $\Omegabar$ satisfy all of the conditions in Section~\ref{section: main results}. If $x_0\in\omclose$ is a point where 
\begin{align*}
-c(x_0, \xbar_i)+c(x_0, \xbar_j)&=-c(x_0, \xbar_i)+c(x_0, \xbar_k)\\
-\D c(x_0, \xbar_i)+\D c(x_0, \xbar_j)&=\paralellconst\left(-\D c(x_0, \xbar_i)+\D c(x_0, \xbar_k)\right)
\end{align*}
for some $\paralellconst\neq 0$ and $j\neq k$, then there exists some $r>0$ depending on $i$, $j$, and $k$ such that at least one of either
\begin{equation*}
-c(x, \xbar_i)+c(x, \xbar_j)\neq -c(x_0, \xbar_i)+c(x_0, \xbar_j)
\end{equation*}
or
\begin{equation*}
-c(x, \xbar_i)+c(x, \xbar_k)\neq -c(x_0, \xbar_i)+c(x_0, \xbar_k)
\end{equation*}
is true for all $0<\distM{x}{x_0} < r$.
\end{lemma}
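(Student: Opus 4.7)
The plan is to argue by contradiction: assume there is a sequence $x_n\in\omclose$ with $x_n\to x_0$, $x_n\neq x_0$, such that for every $n$ both
\begin{equation*}
-c(x_n, \xbar_i)+c(x_n, \xbar_j) = -c(x_0, \xbar_i)+c(x_0, \xbar_j)
\quad\text{and}\quad
-c(x_n, \xbar_i)+c(x_n, \xbar_k) = -c(x_0, \xbar_i)+c(x_0, \xbar_k)
\end{equation*}
hold. The first step is to translate the hypothesis on differentials into a collinearity statement: in the cotangent chart at $x_0$, namely $\bar{x}\mapsto -\D c(x_0, \bar{x})\in T^*_{x_0}\Omega$, the hypothesis rearranges to
\begin{equation*}
-\D c(x_0,\xbar_j) = (1-\paralellconst)(-\D c(x_0,\xbar_i)) + \paralellconst(-\D c(x_0,\xbar_k)),
\end{equation*}
so the representatives of $\xbar_i,\xbar_j,\xbar_k$ are collinear in $T^*_{x_0}\Omega$. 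By \eqref{twist} and the distinctness of $\xbar_j,\xbar_k$ one has $\paralellconst\neq 1$, and by hypothesis $\paralellconst\neq 0$, so exactly one of $\xbar_i,\xbar_j,\xbar_k$ is the strict middle point of the resulting $c$-segment with respect to $x_0$. I will treat the case $\paralellconst\in(0,1)$ (where $\xbar_j$ is the middle point) in detail; the cases $\paralellconst>1$ and $\paralellconst<0$ are handled identically after reparametrizing the $c$-segment so that whichever of $\xbar_k$ or $\xbar_i$ is the middle point occupies a parameter in $(0,1)$.

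Next, parametrize the $c$-segment from $\xbar_i$ to $\xbar_k$ as $\bar{x}(t) := \cExp{x_0}{(1-t)(-\D c(x_0,\xbar_i)) + t(-\D c(x_0,\xbar_k))}$ for $t\in[0,1]$, so that $\bar{x}(0)=\xbar_i$, $\bar{x}(\paralellconst)=\xbar_j$, and $\bar{x}(1)=\xbar_k$; this is well-defined in $\Omegabarclose$ by the $c$-convexity of $\Omegabar$ with respect to $\Omega$ (the case $x_0\in\ombdry$ is recovered by approximation with interior points). Define
\begin{equation*}
F(t,x) := -c(x,\bar{x}(t)) + c(x_0,\bar{x}(t)).
\end{equation*}
A short algebraic manipulation, rewriting each of the two hypothesized equations at $x_n$ as $-c(x_n,\bar{x}(t_*))+c(x_0,\bar{x}(t_*)) = -c(x_n,\xbar_i)+c(x_0,\xbar_i)$ for $t_*=\paralellconst$ and $t_*=1$ respectively, shows that both are together equivalent to the single chain of equalities
\begin{equation*}
F(0,x_n) = F(\paralellconst,x_n) = F(1,x_n).
\end{equation*}
The crux is then to invoke Loeper's strict maximum principle under \eqref{A3s}: there is a neighborhood $U$ of $x_0$ on which, for any $x\in U\setminus\{x_0\}$ and any $t\in(0,1)$, one has the strict inequality $F(t,x) < \max\{F(0,x), F(1,x)\}$. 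Applied at $x=x_n$ (valid for large $n$) with $t=\paralellconst$, and combined with the chain of equalities above, this yields $F(\paralellconst,x_n) < F(\paralellconst,x_n)$, a contradiction.

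The main obstacle is Loeper's strict maximum principle itself; its non-strict version was established in~\cite{Loe10} and already underlies Theorem~\ref{thm: Loeper}, while the strict version under \eqref{A3s} comes from examining the MTW-tensor computation underlying $c$-convexity of visibility sets and observing that the quadratic remainder is strictly positive for $x\neq x_0$, thanks to \eqref{nondeg}, \eqref{twist}, and the nondegeneracy $\xbar_i\neq\xbar_k$ of the $c$-segment. It is precisely the positive lower bound $\delzero>0$ in \eqref{A3s}, as opposed to its degenerate version, that provides the strict inequality needed to close the argument (as anticipated in Remark~\ref{rmk: A3w possible}).
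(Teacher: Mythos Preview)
Your proof is correct and follows essentially the same route as the paper's: both establish collinearity of $-\D c(x_0,\xbar_i),-\D c(x_0,\xbar_j),-\D c(x_0,\xbar_k)$ in $T^*_{x_0}\Omega$, note $\paralellconst\neq 1$ by~\eqref{twist}, single out the middle point of the three, and then invoke Loeper's strict maximum principle along the corresponding $c$-segment to force at least one of the two level-set equalities to fail near $x_0$. The only cosmetic differences are that the paper argues directly (not by contradiction) and cites the quantitative form \cite[Proposition~5.1]{Loe10}, obtaining the explicit lower bound $C'\distMbar{\xbar_i}{\xbar_k}^2\distM{x}{x_0}^2-\gamma\distM{x}{x_0}^3$ in place of your bare strict inequality; and the paper handles the three possible orderings by a ``without loss of generality'' (legitimate because any two of the three pairwise equalities determine the third), whereas you spell out the reparametrization for $\paralellconst>1$ and $\paralellconst<0$.
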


\begin{proof}
By condition~\eqref{twist}, we find that $\paralellconst\neq 1$. Thus, (writing $\p_i:=-\D c(x_0, \xbar_i)$, $\p_j:= -\D c(x_0, \xbar_j)$, and $\p_k:= -\D c(x_0, \xbar_k)$, which are all distinct again by~\eqref{twist}) we see that $\p_i$, $\p_j$, and $\p_k$ are all collinear. Without loss of generality, assume that $\p_j$ lies on the line segment between $\p_i$ and $\p_k$. This means that $\xbar_j$ lies on the $c$-segment with respect to $x_0$ between $\xbar_i$ and $\xbar_k$. Thus by~\cite[Proposition 5.1]{Loe10} and the compactness of $\Omega^{\cl}$,
\begin{align*}
&\max\left\{-c(x, \xbar_i)+c(x_0, \xbar_i), -c(x, \xbar_k)+c(x_0, \xbar_k)\right\}\\
&\qquad\geq -c(x, \xbar_j)+c(x_0, \xbar_j)+C' \distMbar{\xbar_i}{\xbar_k}^2\distM{x}{x_0}^2-\gamma\distM{x}{x_0}^3
\end{align*}
for some $C'>0$, $\gamma>0$ and all $\distM{x}{x_0}$ sufficiently small. Since $\xbar_i\neq \xbar_l$, if $\distM{x}{x_0} <r$ for $r>0$ sufficiently small we will have $C'\distMbar{\xbar_i}{\xbar_k}^2\distM{x}{x_0}^2-\gamma\distM{x}{x_0}^3>0$, and hence either $-c(x, \xbar_i)+c(x_0, \xbar_i)\neq -c(x, \xbar_j)+c(x_0, \xbar_j)$ or $-c(x, \xbar_k)+c(x_0, \xbar_k)\neq -c(x, \xbar_j)+c(x_0, \xbar_j)$ as desired.
\end{proof}

The following lemma is the most computationally intensive of the paper. In calculating the difference quotient of $\GUp{i}$, we will encounter intersections of sets of the form $\Wdel_j\setminus\W_j$ or $\W_j\setminus\Wdel_j$. What we show is that an intersection of these sets with two different indices has $\mu$ measure that decays like $o(\deltanorm)$ as $\deltanorm\to 0$. This will allow us to eliminate most of the terms in the final expression of $\pdiff[d]{i}{\GUp{i}}$. 

Now, under assumption~\eqref{A3s}, each of the sets $\Wdel_j\setminus\W_j$ or $\W_j\setminus\Wdel_j$ are essentially the differences of two nested, strongly convex sets which decrease to $(\dim-1)$--dimensional sets as $\deltanorm\to 0$. If we consider the intersection of two such ``generalized annuli'' and the limiting sets intersect transversally, the decay rate of $o(\deltanorm)$  is readily imagined. In the case when the limiting sets intersect tangentially, we can apply Lemma~\ref{lemma: Wj are only tangential at isolated points} above to obtain the desired decay. The reader who is satisfied with the preceeding explanation may wish to skip over the proof of the following lemma.
\begin{lemma}\label{lemma: order of intersections of differences}
If $c$, $\mu$, $\Omega$, and $\Omegabar$ satisfy all of the conditions in Section~\ref{section: main results}, then for any $1\leq j\neq k\leq K$,
\begin{align*}
 \meas{\left(\Wdel_j\setminus\W_j\right)\cap\left(\Wdel_k\setminus\W_k\right)}&=o(\deltanorm),\\
 \meas{\left(\W_j\setminus\Wdel_j\right)\cap\left(\W_k\setminus\Wdel_k\right)}&=o(\deltanorm).
\end{align*}
%\begin{equation*}
%\meas{\W_{\deltanorm}}=o(\deltanorm)
%\end{equation*}
as $\deltanorm\to 0$.
Here, the rate of decay $o(\deltanorm)$ may depend on $\boldd\in\R^K$.
\end{lemma}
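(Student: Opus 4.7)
The plan is to pass to cotangent coordinates at $\xbar_i$ via the map $x \mapsto -\Dbar c(x, \xbar_i)$, a $C^3$-diffeomorphism of $\omclose$ onto its image $[\omclose]_{\xbar_i}$ by \eqref{A0} and \eqref{nondeg}; by Remark~\ref{rmk: \W_j are c-convex}, Theorem~\ref{thm: Loeper}, and \eqref{A3s}, each $[\W_l]_{\xbar_i}$ is strongly convex, each $[\Wdel_l]_{\xbar_i}$ is convex, and the two are nested. Since the pushforward of $\mu$ has a bounded density, it suffices to bound Lebesgue measure in the cotangent chart, and with that reduction I work on $\omclose$ directly for notational simplicity. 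Write $F_l(x) := -c(x, \xbar_i) + c(x, \xbar_l)$ and $\lambda_l := \log(\dsub{i}/\dsub{l})$, so $\partial\W_l \cap \Omega = \{F_l = \lambda_l\} =: S_l$, and each shell $\Wdel_l \symdiff \W_l$ is a tubular neighborhood of $S_l$ of thickness $\Theta(|\deltad{l}|) \leq C\deltanorm$ (using \eqref{twist} to lower bound $|\nabla F_l|$ on $\omclose$). I focus on $(\Wdel_j \setminus \W_j) \cap (\Wdel_k \setminus \W_k)$; the other case is symmetric.

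Classify each $x_0 \in S_j \cap S_k$ as \emph{transverse} if $\nabla F_j(x_0)$ and $\nabla F_k(x_0)$ are linearly independent, and \emph{tangential} otherwise. At a tangential $x_0$, the three cotangent vectors $-\D c(x_0, \xbar_i), -\D c(x_0, \xbar_j), -\D c(x_0, \xbar_k)$ are collinear, so some $\xbar_m \in \{\xbar_i, \xbar_j, \xbar_k\}$ lies on the $c$-segment between the other two with respect to $x_0$. Proposition~5.1 of \cite{Loe10}, invoked as in the proof of Lemma~\ref{lemma: Wj are only tangential at isolated points}, yields the quadratic lower bound
\begin{equation*}
\max\{-c(x, \xbar_a) + c(x_0, \xbar_a),\ -c(x, \xbar_b) + c(x_0, \xbar_b)\} \geq -c(x, \xbar_m) + c(x_0, \xbar_m) + C|x - x_0|^2 - \gamma|x - x_0|^3
\end{equation*}
for $\{a, b\} = \{i, j, k\} \setminus \{m\}$ and $x$ close to $x_0$. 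If also $x \in S_j \cap S_k$, the identities $F_j(x) = F_j(x_0)$ and $F_k(x) = F_k(x_0)$ translate to $-c(x, \xbar_l) + c(x_0, \xbar_l) = -c(x, \xbar_i) + c(x_0, \xbar_i)$ for $l \in \{j, k\}$, and substitution forces $0 \geq C|x - x_0|^2 - \gamma|x - x_0|^3$, hence $x = x_0$ once $|x - x_0|$ is small. Thus tangential points on $S_j \cap S_k$ are isolated, and by compactness of $\omclose$ there are only finitely many, say $x_0^{(1)}, \ldots, x_0^{(N)}$; off small disjoint balls around them the angle between $\nabla F_j$ and $\nabla F_k$ admits a uniform positive lower bound.

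On the resulting compact transverse piece $\mathcal{T} \subset S_j \cap S_k$, adopt straightening coordinates $(F_j, F_k, y)$ with $y$ parametrizing $\mathcal{T}$; the Jacobian of this chart is controlled from below by the sine of the angle. Since the intersection lies in the box $\{|F_j - \lambda_j| \leq C|\deltad{j}|,\ |F_k - \lambda_k| \leq C|\deltad{k}|\}$, its Lebesgue measure is $O(|\deltad{j}|\,|\deltad{k}|) = O(\deltanorm^2) = o(\deltanorm)$.

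The tangential contribution is the main technical step. Near each $x_0^{(m)}$, every $x$ in both shells satisfies $|F_j(x) - \lambda_j|, |F_k(x) - \lambda_k| \leq C\deltanorm$; in terms of $A_l := -c(x, \xbar_l) + c(x_0^{(m)}, \xbar_l)$ this reads $|A_j - A_i|, |A_k - A_i| \leq C\deltanorm$, hence $|A_a - A_m| \leq 2C\deltanorm$ for any partition $\{a, b, m\} = \{i, j, k\}$. Feeding these bounds into the quadratic inequality above---regardless of which of $\xbar_i, \xbar_j, \xbar_k$ is the middle point---gives $C|x - x_0^{(m)}|^2 - \gamma|x - x_0^{(m)}|^3 \leq 2C\deltanorm$, so the intersection is confined to $B(x_0^{(m)}, C'\sqrt{\deltanorm})$. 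Inside such a ball it still lies in a shell of normal thickness $O(\deltanorm)$, so its Lebesgue measure is bounded by $C\deltanorm \cdot (\sqrt{\deltanorm})^{\dim - 1} = O(\deltanorm^{(\dim + 1)/2}) = o(\deltanorm)$ for $\dim \geq 2$ (the case $\dim = 1$ being degenerate, as \eqref{A3s} is then vacuous and the visibility sets are intervals). Summing the transverse and tangential contributions yields the claim. The main obstacle will be executing the case analysis on which of $\xbar_i, \xbar_j, \xbar_k$ is ``between'' the others in the $c$-sense and checking in each case that the two simultaneous shell bounds close the estimate down to $|x - x_0^{(m)}|^2 \lesssim \deltanorm$ cleanly, including tracking the (possibly $\boldd$-dependent) constants through the Loeper estimate.
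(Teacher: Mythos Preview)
Your approach is essentially the same as the paper's: pass to cotangent coordinates at $\xbar_i$, localize around $S_j\cap S_k$, split into transverse and tangential cases, and invoke Loeper's quantitative DASM (Proposition~5.1 of \cite{Loe10}, i.e.\ the paper's Lemma~\ref{lemma: Wj are only tangential at isolated points}) for the latter. Your transverse estimate via the $(F_j,F_k,y)$ chart and the paper's ``straighten both boundaries to orthogonal hyperplanes'' argument are the same computation in different dress, both giving $O(\deltanorm^2)$.

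The one genuine difference is at tangential points. The paper uses Lemma~\ref{lemma: Wj are only tangential at isolated points} only qualitatively---to conclude that the tangential point is isolated in $S_j\cap S_k$---and then runs a soft compactness/contradiction argument to show that the projection of the intersection onto $\plane{\dim}$ has diameter $o(1)$; combined with the $O(\deltanorm)$ slab height this gives only $o(\deltanorm)$ with no explicit rate. You instead feed the two simultaneous shell constraints $|A_a-A_m|\lesssim\deltanorm$ directly into the quadratic lower bound, forcing $|x-x_0|\lesssim\sqrt{\deltanorm}$ and hence the sharper $O(\deltanorm^{(\dim+1)/2})$. This is cleaner and more informative; note incidentally that it is exactly this quantitative gain that would be lost under the degenerate condition $\delta_0=0$ mentioned in Remark~\ref{rmk: A3w possible}. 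One point the paper handles explicitly that you sweep under the rug is the contribution from $\ombdry$ (the paper's Case~2): you should remark that the convexity of $\Omegacoord{\xbar_i}$ and a $C^1$ extension of $F_j,F_k$ across $\ombdrycoord{\xbar_i}$ let the same local arguments go through there, since your estimates do not otherwise distinguish interior from boundary points of $S_j\cap S_k$.
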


\begin{proof}
By conditions~\eqref{twist},~\eqref{nondeg}, the boundedness of $\Omega$, and since $\mu$ is absolutely continuous with respect to $\dVol_{\M}$ with bounded density, for any measurable set $E\subset\Omega$ we have
\begin{align*}
 \meas{E}\lesssim\Leb{\coord{E}{\xbar_i}},
\end{align*}
where $\Leb{\cdot}$ is the volume measure induced on $T^*_{\xbar_i}\Mbar$ by the Riemannian metric on $\Mbar$. Hence, it will be sufficient to prove 
\begin{align*}
 \Leb{\left(\Wdelcoord{j}{i}\setminus\Wcoord{j}{i}\right)\cap\left(\Wdelcoord{k}{i}\setminus\Wcoord{k}{i}\right)}&=o(\deltanorm)
 \end{align*}
 (the second equality in the statement of the lemma follows by a symmetric argument). Now since $\deltad{i}=0$, this implies $\Wdel_{i}=\W_{i}$, hence we can assume that $j$, $k\neq i$. We then define 
\begin{align*}
X_i(\p):&=\cExp{\xbar_i}{\p}\\
 \ctil_{l, i}(\p):&=c(X_i(\p), \xbar_l)-c(X_i(\p), \xbar_i)
\end{align*}
for any $1\leq l\leq K$, so that
\begin{align}
\Wcoord{j}{i}&=\{\p\in\Omegacoord{\xbar_i}\mid-\ctil_{j, i}(\p)\leq \log{\dsub{j}}-\log{\dsub{i}}\}\notag\\
\Wcoord{k}{i}&=\{\p\in\Omegacoord{\xbar_i}\mid-\ctil_{k, i}(\p)\leq \log{\dsub{k}}-\log{\dsub{i}}\}\notag\\
\Wdelcoord{j}{i}&=\{\p\in\Omegacoord{\xbar_i}\mid-\ctil_{j, i}(\p)\leq \log{(\dsub{j}+\deltad{j})}-\log{\dsub{i}}\}\notag\\
\Wdelcoord{k}{i}&=\{\p\in\Omegacoord{\xbar_i}\mid-\ctil_{k, i}(\p)\leq \log{(\dsub{k}+\deltad{k})}-\log{\dsub{i}}\}\label{eqn: wcood expressions}.
\end{align}
If either $\deltad{j}$ or $\deltad{k}$ are non-positive, we would have 
\begin{align*}
 \left(\Wdelcoord{j}{i}\setminus\Wcoord{j}{i}\right)\cap\left(\Wdelcoord{k}{i}\setminus\Wcoord{k}{i}\right)=\emptyset,
\end{align*}
hence we may assume both quantities are strictly positive.

We will now show that the desired rate of decay holds on a neighborhood of each point contained in $\Wbdrycoord{j}{i}\cap\Wbdrycoord{k}{i}$. More precisely, we show that for each $\p\in\Wbdrycoord{j}{i}\cap\Wbdrycoord{k}{i}$, there exists a neighborhood $\calN_{\p}$ depending on $\boldd$ and the cost function $c$ such that
\begin{equation}\label{eqn: local bound}
\Leb{\left(\Wdelcoord{j}{i}\setminus\Wcoord{j}{i}\right)\cap\left(\Wdelcoord{k}{i}\setminus\Wcoord{k}{i}\right)\cap \calN_{\p}}=o(\deltanorm)
\end{equation}
as $\deltanorm\to 0$. There are a number of different cases to work through, depending on how the two sets $\Wbdrycoord{j}{i}$ and $\Wbdrycoord{k}{i}$ intersect. It is understood that whenever we take a new ``small neighborhood'' of a point, it will be contained in any previous such neighborhoods. Also, in each of the following cases, we assume by a translation that 
\begin{align*}
0\in\Wbdrycoord{j}{i}\cap\Wbdrycoord{k}{i}.
\end{align*}

\noindent\underline{{\bf Case 1}: (Intersection points in the interior of $\Omegacoord{\xbar_i}$)}

Suppose that $0$ is in the interior of $\Omegacoord{\xbar_i}$. We have two subcases.

\underline{{\bf Case 1a}: (Tangential intersection)}

First assume that the normal vectors to $\Wbdrycoord{j}{i}$ and $\Wbdrycoord{k}{i}$ at $0$ are parallel. Since $0$ is in the interior of $\Omegacoord{\xbar_i}$, in some small neighborhood of $0$,
\begin{align*}
\Wbdrycoord{j}{i}&\subseteq\left\{\p\in\Omegacoord{\xbar_i}\mid-\ctil_{j, i}(\p)=\log{\dsub{j}}-\log{\dsub{i}}\right\},\\
\Wbdrycoord{k}{i}&\subseteq\left\{\p\in\Omegacoord{\xbar_i}\mid-\ctil_{k, i}(\p)=\log{\dsub{l}}-\log{\dsub{i}}\right\}
\end{align*}
and $-D\ctil_{j, i}$ and $-D\ctil_{k, i}$ (nonzero by~\eqref{twist}) are normal vectors to $\Wbdrycoord{j}{i}$ and $\Wbdrycoord{k}{i}$ respectively. Thus by Lemma~\ref{lemma: Wj are only tangential at isolated points}, we can find some radius $r>0$ such that 
\begin{align}
\Wbdrycoord{j}{i}\cap\Wbdrycoord{k}{i}\cap B_r(0)=\{0\}\label{eqn: no other points in nbhd}.
\end{align}
By translating and rotating coordinates (denoted by $(\p^1, \ldots, \p^\dim)$), we may assume that the $(\dim-1)$--dimensional hyperplane normal to $  -D\ctil_{j, i}(0)$ is given by $\p^\dim=0$, with $  -D\ctil_{j, i}(0)$ in the positive $\p^\dim$ direction. Since $c$ is differentiable, $\Wbdrycoord{j}{i}$ is given as the graph of a real valued $C^1$ function of $\p'=(\p^1, \ldots, \p^{\dim-1})$ on a small neighborhood of $0$, call this function $\rho(\p')$. Consider the transformation given by 
\begin{equation*}
\p\mapsto (\p', \p^\dim-\rho(\p')),
\end{equation*}
which maps $\Wbdrycoord{j}{i}$ onto the hyperplane $\plane{\dim}:=\left\{\p^\dim=0\right\}$ in this neighborhood.
It is easy to see that the differential of this map is the identity matrix at the origin, and so it has nondegenerate Jacobian determinant on a small neighborhood of the origin. Thus, we may assume that 
\begin{align}
\Wbdrycoord{j}{i}&\subset \plane{\dim},\notag\\
\Wcoord{j}{i}&\subset \{\p^\dim\leq 0\}\label{eqn: wcoord below}
\end{align}
in some small neighborhood of $0$ while~\eqref{eqn: no other points in nbhd} continues to hold.

We will now bound the height of $\left(\Wdelcoord{j}{i}\setminus\Wcoord{j}{i}\right)\cap\left(\Wdelcoord{k}{i}\setminus\Wcoord{k}{i}\right)$ in the $\p^\dim$ direction in a small neighborhood of the origin, in terms of $\deltanorm$. Recalling that 
\begin{align*}
\inner{  -D\ctil_{j, i}(\p)}{e_\dim}=\lvert  -D\ctil_{j, i}(\p)\rvert> 0
\end{align*}
for $\p\in\Wbdrycoord{j}{i}$ by~\eqref{twist}, we can find some sufficiently small neighborhood $B_{r_0}(0)$ of the origin on which
\begin{equation*}
\inner{  -D\ctil_{j, i}(\p)}{e_\dim}\geq \frac{1}{2}\inner{  -D\ctil_{j, i}(\p')}{e_\dim}.
\end{equation*}
 Now fix any $\p\in \left(\Wdelcoord{j}{i}\setminus\Wcoord{j}{i}\right)\cap B_{r_0}(0)$ (so in particular, $\p'\in\Wbdrycoord{j}{i}$), then since $\deltad{j}>0$ and by~\eqref{eqn: wcoord below} we see that
\begin{align*}
\inner{\p-\p'}{e_\dim}\geq 0.
\end{align*}
Then we calculate,
\begin{align*}
\frac{\deltad{j}}{\dsub{j}}+o(\deltanorm)&=\log{(\dsub{j}+\deltad{j})}-\log{\dsub{j}}\\
&\geq -\ctil_{j, i}(\p)+\ctil_{j, i}(\p')\\
&=\int_0^1  \inner{-D\ctil_{j, i}(t\p+(1-t)\p')}{(\p-\p')}dt\\
&=\lvert \p-\p'\rvert\int_0^1\inner{  -D\ctil_{j, i}(t\p+(1-t)\p')}{e_\dim}dt\\
&\geq \frac{1}{2}\inner{  -D\ctil_{j, i}(\p')}{e_\dim}\vert \p-\p'\rvert\\
&=\frac{1}{2}\lvert  -D\ctil_{j, i}(\p')\rvert\lvert \p-\p'\rvert\\
&\gtrsim\lvert \p-\p'\rvert
\end{align*}
by~\eqref{twist} and the compactness of $\Omega$. In particular, for some $\constone>0$ depending on $\boldd$ and $c$,
\begin{equation*}
\lvert \p-\p'\rvert \leq \constone\deltanorm
\end{equation*}
for $\deltanorm$ small enough, and any $\p\in \left(\Wdelcoord{j}{i}\setminus\Wcoord{j}{i}\right)\cap B_{r_0}(0)$.
Thus
\begin{equation}\label{the difference is in a slab}
\left(\Wdelcoord{j}{i}\setminus\Wcoord{j}{i}\right)\cap B_{r_0}(0)\subseteq \left\{\lvert \p^\dim\rvert< \constone\deltanorm\right\}\cap B_{r_0}(0).
\end{equation}
We now claim that for $\calN_0:= B_{r_0}(0)$,
\begin{equation}\label{eqn: decay of diameter of projection}
\diam{\left(\proj{\left(\Wdelcoord{j}{i}\setminus\Wcoord{j}{i}\right)\cap\left(\Wdelcoord{k}{i}\setminus\Wcoord{k}{i}\right)}\cap\calN_0\right)} =o(1)
\end{equation}
as $\deltanorm\to 0$, where $\proj{\cdot}$ is orthogonal projection $\plane{\dim}$. Suppose not, then there is a constant $K_0>0$, a sequence of $\bolddeltad_n$ with $\lvert\bolddeltad_n\rvert\to 0$ as $n\to\infty$, and a sequence of points 
\begin{align*}
\p_{1, n},\ \p_{2, n}\in\left(\Wdelcoord[\bolddeltad_n]{j}{i}\setminus\Wcoord{j}{i}\right)\cap\left(\Wdelcoord[\bolddeltad_n]{k}{i}\setminus\Wcoord{k}{i}\right)\cap \calN_0
\end{align*}
such that $\lvert \p'_{1, n}-\p'_{2, n}\rvert > K_0$ for all $n$. By the boundedness of $\calN_0$, we may pass to subsequences and assume that $\p_{1, n}\to \p_1$ and $\p_{2, n}\to \p_2$ for some points $\p_1$ and $\p_2$ as $n\to\infty$. By the above calculations, we see that 
\begin{align*}
\lvert \p_{1, n}-\p'_{1, n}\rvert &< \constone\lvert\bolddeltad_n\rvert\\
\lvert \p_{2, n}-\p'_{2, n}\rvert &< \constone\lvert\bolddeltad_n\rvert
\end{align*}
 for each $n$. Hence, by letting $n\to\infty$ we see that $\p_1=\p'_1$ and $\p_2=\p'_2$, and thus 
 \begin{align*}
 \lvert \p_1-\p_2\rvert>K_0.
 \end{align*}
 However, by the continuity of $c$, it is clear that both $\p_1$ and $\p_2$ must be contained in $\Wbdrycoord{j}{i}\cap\Wbdrycoord{k}{i}\cap \calN_0$ which contradicts~\eqref{eqn: no other points in nbhd}.

Finally, by~\eqref{the difference is in a slab} the set $\left(\Wdelcoord{j}{i}\setminus\Wcoord{j}{i}\right)\cap\left(\Wdelcoord{k}{i}\setminus\Wcoord{k}{i}\right)\cap\calN_0$ is contained in a cylinder with $(\dim-1)$--dimensional base 
\begin{align*}
 \proj{\left(\Wdelcoord{j}{i}\setminus\Wcoord{j}{i}\right)\cap\left(\Wdelcoord{k}{i}\setminus\Wcoord{k}{i}\right)}\cap\calN_0
\end{align*}
and height $\constone\deltanorm$, and by~\eqref{eqn: decay of diameter of projection} we see this set must have measure $o(\deltanorm)$ as $\deltanorm\to 0$, thus obtaining~\eqref{eqn: local bound}.
%\begin{align*}
%&\Leb{\left(\Wdelcoord{j}{i}\setminus\Wcoord{j}{i}\right)\cap\left(\Wdelcoord{l}{i}\setminus\Wcoord{l}{i}\right)\cap\calN}\\%&\leq \meas{\left(\proj{\W_{\deltanorm}}\times\R\right)\cap\calN\cap S^{x_n}_{\deltanorm}}\\
%&\leq \Leb{\left(\proj{\left(\Wdelcoord{j}{i}\setminus\Wcoord{j}{i}\right)\cap\left(\Wdelcoord{l}{i}\setminus\Wcoord{l}{i}\right)}\times\R\right)\cap\calN\cap S^{x_n}_{\deltanorm}}\\
%&\leq C \diam{\left(\proj{\left(\Wdelcoord{j}{i}\setminus\Wcoord{j}{i}\right)\cap\left(\Wdelcoord{l}{i}\setminus\Wcoord{l}{i}\right)}\cap\calN_0\right)}\deltanorm\\
%&=o(\deltanorm)
%\end{align*}
%by~\eqref{eqn: decay of diameter of projection}.

\underline{{\bf Case 1b}: (Nontangential intersection)}

Now suppose the normal vectors to $\Wbdrycoord{j}{i}$ and $\Wbdrycoord{k}{i}$ at $0$ are not parallel. Again, translating, rotating, and straightening out $\Wbdrycoord{j}{i}$ near $0$, we assume that $-D\ctil_{j, i}(0)$ is in the positive $\p^\dim$ direction, and both of the inclusions~\eqref{eqn: wcoord below} hold in a small neighborhood of the origin.

Since the intersection point is in the interior of $\Omegacoord{\xbar_i}$, by~\eqref{A0} and~\eqref{twist}, there is a unique $(\dim-1)$--dimensional tangent hyperplane $T_0\Wbdrycoord{k}{i}$, to $\Wbdrycoord{k}{i}$ at $0$ and it does not coincide with $\plane{\dim}$. Since they do not coincide, the intersection of $T_0\Wbdrycoord{k}{i}$ and $\plane{\dim}$ form an $(\dim-2)$--dimensional linear subspace. Pick a basis $\{v_1, \ldots, v_{\dim-2}\}$ for this subspace, then add two vectors $v$ and $w$ to the collection such that $\{v_1, \ldots, v_{\dim-2}, v\}$ is a basis for $\plane{\dim}$ and $\{v_1, \ldots, v_{\dim-2}, w\}$ is a basis for $T_0\Wbdrycoord{k}{i}$. Clearly, $\{v_1, \ldots, v_{\dim-2}, v, w\}$ is a linearly independent collection of $\dim$ vectors. If we define a linear transformation $L$ by
\begin{align*}
L(v_l)&=v_l,\ 1\leq l \leq \dim-2,\\
L(v)&=v,\\
L(w)&=(0, \ldots, 0, 1),
\end{align*}
this $L$ is invertible, maps $\plane{\dim}$ onto itself, and maps $T_0\Wbdrycoord{k}{i}$ onto an $(\dim-1)$--dimensional linear subspace orthogonal to $\plane{\dim}$. By an additional rotation, we may assume this subspace is $\plane{\dim-1}:=\left\{\p^{\dim-1}=0\right\}$. Since $\Wbdrycoord{k}{i}$ is locally the graph of a $C^1$ function on $\plane{\dim-1}$, we may straighten $\Wbdrycoord{k}{i}$ on a small neighborhood $\calN_0$ of $0$ so it coincides with $\plane{\dim-1}$ (as in the first part of the proof of Case 1a), and this transformation will map $\plane{\dim}$ to itself (in particular, we will still have $\Wbdrycoord{j}{i}\cap\calN_0\subset \plane{\dim}$). Thus, we may assume that in some small neighborhood of the origin,~\eqref{eqn: wcoord below} holds and $\Wbdrycoord{k}{i}=\plane{\dim-1}$, while $\Wcoord{k}{i}\subset \{\p^{\dim-1}\leq 0\}$. Additionally, by restricting to a smaller neighborhood if necessary (which we still write as $\calN_0$), we obtain by similar reasoning to Case 1a above that 
\begin{align*}
&\left(\Wdelcoord{j}{i}\setminus\Wcoord{j}{i}\right)\cap\left(\Wdelcoord{k}{i}\setminus\Wcoord{k}{i}\right)\cap\calN_0\\
&\qquad\subset \left\{\lvert \p^{\dim-1}\rvert<\constone\deltanorm,\ \lvert \p^{\dim}\rvert<\constone\deltanorm\right\}\cap\calN_0,
\end{align*}
%where 
%\begin{equation*}
%S^{x_{\dim-1}}_{\deltanorm}:=\left\{\lvert x_{\dim-1}\rvert<C\deltanorm\right\}.
%\end{equation*}
which has measure of order $o(\deltanorm)$, hence we obtain~\eqref{eqn: local bound} again.\\
\\
\underline{{\bf Case 2}: (Intersection points on $\ombdrycoord{\xbar_i}$)}

Suppose now that $0\in\ombdrycoord{\xbar_i}$. Again we consider two subcases.

\underline{{\bf Case 2a}: (``Fake boundary'')} 

Suppose that $-\ctil_{j, i}(0)\neq \log{\dsub{j}}-\log{\dsub{i}}$ (thus $-\ctil_{j, i}(0)< \log{\dsub{j}}-\log{\dsub{i}}$ by the definition of $\W_j$). By the continuity of $-\ctil_{j, i}$, we can find a small neighborhood $\calN_0$ of the origin that depends on $c$ and $\boldd$ on which both $-\ctil_{j, i}<\log{(\dsub{j}+\deltad{j})}-\log{\dsub{i}}$ and $-\ctil_{j, i}<\log{\dsub{j}}-\log{\dsub{i}}$ whenever $\deltanorm$ is sufficiently small, i.e.
\begin{align*}
 \left(\Wdelcoord{j}{i}\setminus\Wcoord{j}{i}\right)\cap\calN_0=\emptyset,
\end{align*}
and we immediately see~\eqref{eqn: local bound}.

A symmetric argument holds if $-\ctil_{k, i}(0)\neq \log{\dsub{k}}-\log{\dsub{i}}$.
% or $-\ctil_{l, i}(0)\neq \log{\dsub{l}}-\log{\dsub{i}}$. Without loss of generality, suppose $-\ctil_{j, i}(0)\neq \log{\dsub{j}}-\log{\dsub{i}}$.
%Since $\Wbdrycoord{j}{i}$ is compact, we may cover it with a finite number of neighborhoods as in~\eqref{the difference is in a slab}, and obtain the existence of a $C>0$ and $r_0>0$ such that
%\begin{equation}\label{the difference is in a neighborhood}
%\left(\Wdelcoord{j}{i}\setminus\Wcoord{j}{i}\right)\cap\calN_{r_0}\left(\Wbdrycoord{j}{i}\right) \subseteq \calN_{C\deltanorm}\left(\Wbdrycoord{j}{i}\right)
%\end{equation}
%for all $\deltanorm$ sufficiently small, where $\calN_{r}(E):=\bigcup_{x\in E}B_{r}(x)$ is the $r>0$ neighborhood of any set $E$. Thus, by the continuity of $-\ctil_{j, i}$, we can find a neighborhood $\calN$ of the origin such that for $\deltanorm$ sufficiently small,
%\begin{equation*}
%\left(\Wdelcoord{j}{i}\setminus\Wcoord{j}{i}\right)\cap\calN=\emptyset.
%\end{equation*}

\underline{{\bf Case 2b}: (Tangential intersection)} 

Now suppose that $-\ctil_{j, i}(0)= \log{\dsub{j}}-\log{\dsub{i}}=\log{\dsub{k}}-\log{\dsub{i}}=-\ctil_{k, i}(0)$, and the normal vectors $-D\ctil_{j, i}(0)$ and $-D\ctil_{k, i}(0)$ are parallel. Then, note that since $\Omegacoord{\xbar_i}$ is convex and bounded by assumption, there exists a Lipschitz mapping that straightens the boundary of $\Wcoord{j}{i}\cap\Omegacoord{\xbar_i}$ in a small neighborhood of the origin, with a controlled Lipschitz norm depending only on $\Omega$, $c$, and $\boldd$. As such, we may apply the same proof as Case 1a above to obtain~\eqref{eqn: local bound} (note that Lemma~\ref{lemma: Wj are only tangential at isolated points} is still valid when the point $x_0\in\bdry{\Omega}$).

\underline{{\bf Case 2c}: (Nontangential intersection)} 

For the final case, suppose that $-\ctil_{j, i}(0)= \log{\dsub{j}}-\log{\dsub{i}}=\log{\dsub{k}}-\log{\dsub{i}}=-\ctil_{k, i}(0)$, and the normal vectors $-D\ctil_{j, i}(0)$ and $-D\ctil_{k, i}(0)$ are not parallel. Since $c$ is $C^1$ up to the boundary of $\Omega$, we may extend both $-\ctil_{j, i}$ and $-\ctil_{k, i}$ outside of $\Omegacoord{\xbar_i}^{\cl}$ to a small neighborhood of $0$ in a $C^1$ manner. Then we may apply the same proof as in Case 1b to obtain~\eqref{eqn: local bound} (note that we do not require the use of Lemma~\ref{lemma: Wj are only tangential at isolated points} in this case).\\
\\
Now we can show the desired global equality. For each point $\p\in\Wbdrycoord{j}{i}\cap \Wbdrycoord{k}{i}$, there exists a neighborhood $\calN_{\p}$ that corresponds to $\p$ via one of the above steps, and we can use the compactness of $\Wbdrycoord{j}{i}\cap \Wbdrycoord{k}{i}$ to extract a finite cover $\calN_{\p_1}, \ldots \calN_{\p_N}$. It is easy to see that each of the sets $\Wdelcoord{j}{i}\setminus\Wcoord{j}{i}$ and $\Wdelcoord{k}{i}\setminus\Wcoord{k}{i}$ are contained in some neighborhoods of $\Wbdrycoord{j}{i}$ and $\Wbdrycoord{k}{i}$ respectively, whose diameter decreases to zero with $\deltanorm$. Thus we obtain
\begin{align*}
&\Leb{\left(\Wdelcoord{j}{i}\setminus\Wcoord{j}{i}\right)\cap\left(\Wdelcoord{k}{i}\setminus\Wcoord{k}{i}\right)}\\
&\leq \sum_{l=1}^N \Leb{\left(\Wdelcoord{j}{i}\setminus\Wcoord{j}{i}\right)\cap\left(\Wdelcoord{k}{i}\setminus\Wcoord{k}{i}\right)\cap\calN_{\p_l}}\\
&=o(\deltanorm)
\end{align*}
as $\deltanorm\to 0$, as desired.
\end{proof}
With this lemma in hand, we are finally ready to show the differentiability of $\GUp{i}$, and establish an upper bound for the magnitude of the derivative. The calculations have been split into two separate propositions, due to length.
\begin{prop}\label{prop: difference of G^k}
Assume $c$, $\mu$, $\Omega$, and $\Omegabar$ satisfy all of the conditions in Section~\ref{section: main results}. Then,
\begin{equation*}
\GUp{i}(\boldd+\bolddeltad)-\GUp{i}(\boldd)=\sum_{k=1}^K\frac{\beta_{i,k, \bolddeltad}\deltad{k}}{\dsub{k}}+o(\deltanorm),
\end{equation*}
where
\begin{align*}
\beta_{i, k, \bolddeltad}:=
\int_{U_{k,\boldd, \bolddeltad}}\I(X_i(\p))\lvert  -D\ctil_{k, i}(\p)\rvert ^{-1}\lvert\det{DX_i(\p)}\rvert d\surface{\p}
\end{align*}
and (using some of the same notation as the proof of Lemma~\ref{lemma: order of intersections of differences})
\begin{align*}
X_i(\p):&=\cExp{\xbar_i}{\p},\\
\ctil_{k, i}(\p)&:=c(X_i(\p), \xbar_k)-c(X_i(\p), \xbar_i),\\
U_{k, \boldd, \bolddeltad}&:=
\begin{cases}
\left\{\p\in\Omegacoord{\xbar_i}\mid-\ctil_{k, i}(\p)=\log{\dsub{k}}-\log{\dsub{i}}\right\}\cap \coord{\bigcap_{l\neq k}\Wdel_l}{\xbar_i}, &\deltad{k}\geq0\\
\left\{\p\in\Omegacoord{\xbar_i}\mid-\ctil_{k, i}(\p)=\log{(\dsub{k}+\deltad{k})}-\log{\dsub{i}}\right\}\cap  \coord{\bigcap_{l\neq k}\W_l}{\xbar_i}, &\deltad{k}<0.
\end{cases}
\end{align*}
\end{prop}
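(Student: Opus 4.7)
The plan is to write $\GUp{i}(\boldd+\bolddeltad)-\GUp{i}(\boldd)$ as a signed sum of $\mu$-measures of thin annular regions and then extract the leading order of each by a coarea argument in cotangent coordinates above $\xbar_i$. By Lemma~\ref{lemma: characterization of vis sets}, up to $\mu$-null sets we have $\vis{\xbar_i}=\bigcap_{j=1}^K\W_j$ and $\vis[\boldd+\bolddeltad]{\xbar_i}=\bigcap_{j=1}^K\Wdel_j$. Since $\deltad{k}>0$ is equivalent to $\W_k\subsetneq\Wdel_k$ and $\deltad{k}<0$ to $\Wdel_k\subsetneq\W_k$, a direct set-theoretic manipulation yields
\begin{align*}
\vis[\boldd+\bolddeltad]{\xbar_i}\setminus\vis{\xbar_i}&=\bigcup_{\deltad{k}>0}\Bigl[(\Wdel_k\setminus\W_k)\cap\bigcap_{j\ne k}\Wdel_j\Bigr],\\
\vis{\xbar_i}\setminus\vis[\boldd+\bolddeltad]{\xbar_i}&=\bigcup_{\deltad{k}<0}\Bigl[(\W_k\setminus\Wdel_k)\cap\bigcap_{j\ne k}\W_j\Bigr],
\end{align*}
with the $k=i$ index absent since $\W_i=\Wdel_i=\Omega$ and $\deltad{i}=0$.

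Next, inclusion–exclusion combined with Lemma~\ref{lemma: order of intersections of differences} reduces the $\mu$-measure of each union to the sum of $\mu$-measures of its single-index pieces, modulo an $o(\deltanorm)$ error arising from pairwise overlaps of distinct annuli. To evaluate one such single-index piece I would pull back to cotangent coordinates via $X_i$, which converts $d\mu$ into $\I(X_i(\p))\lvert\det DX_i(\p)\rvert\,d\p$. By~\eqref{eqn: wcoord expressions}, the set $\Wdelcoord{k}{i}\setminus\Wcoord{k}{i}$ is precisely the slab
\begin{align*}
\log\dsub{k}-\log\dsub{i}<-\ctil_{k, i}(\p)\leq\log(\dsub{k}+\deltad{k})-\log\dsub{i},
\end{align*}
whose thickness in the value of $-\ctil_{k,i}$ is $\log(\dsub{k}+\deltad{k})-\log\dsub{k}=\deltad{k}/\dsub{k}+o(\deltad{k})$. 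Since $\lvert -D\ctil_{k,i}\rvert>0$ on $\Omega^{\cl}$ by~\eqref{twist}, the coarea formula expresses the slab $\mu$-measure as the product of this thickness with a surface integral of $\lvert -D\ctil_{k,i}\rvert^{-1}\I(X_i(\cdot))\lvert\det DX_i(\cdot)\rvert$ over a level set of $-\ctil_{k,i}$; intersecting with $\bigcap_{j\ne k}\Wdel_j$ (or with $\bigcap_{j\ne k}\W_j$ in the $\deltad{k}<0$ case, where one performs coarea over the shifted level set) cuts the domain down to exactly $U_{k,\boldd,\bolddeltad}$. After accounting for signs, this produces the term $\beta_{i,k,\bolddeltad}\deltad{k}/\dsub{k}$.

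The main technical obstacle is upgrading the error from the naive $O(\deltanorm)$ bound to the required $o(\deltanorm)$ rate. There are two sources to control: the inclusion–exclusion reduction, for which Lemma~\ref{lemma: order of intersections of differences} directly provides the $o(\deltanorm)$ rate and is precisely where~\eqref{A3s} enters through the strong $c$-convexity of the $\W_j$ (Remark~\ref{rmk: \W_j are c-convex}); and the coarea leading-order extraction, for which one must verify that the integrand varies continuously as the level of $-\ctil_{k,i}$ shifts by $O(\deltanorm)$ and that the $\surface{\cdot}$-measure of the relevant level set is uniformly bounded. The first is immediate from~\eqref{A0} and~\eqref{twist}; the second follows from Lemma~\ref{lemma: monotonicity of surface measure of convex sets} applied to the nested strongly convex sets $\Wcoord{j}{i}$.
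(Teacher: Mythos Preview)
Your proposal is correct and follows essentially the same approach as the paper: decompose the difference of visibility sets into a union of thin annular pieces $(\Wdel_k\setminus\W_k)\cap\bigcap_{j\ne k}\Wdel_j$ (and their counterparts for $\deltad{k}<0$), invoke Lemma~\ref{lemma: order of intersections of differences} via inclusion--exclusion to discard pairwise overlaps as $o(\deltanorm)$, then apply the coarea formula in the cotangent coordinates $\p=-\Dbar c(\cdot,\xbar_i)$ to extract the leading term $\beta_{i,k,\bolddeltad}\deltad{k}/\dsub{k}$. The only organizational difference is that you separate the cases $\deltad{k}>0$ and $\deltad{k}<0$ at the outset via your exact set identities, whereas the paper first writes a single inclusion--exclusion expansion of $\meas{\bigcap_l\Wdel_l\setminus\bigcap_k\W_k}$ (and its symmetric partner) and only afterward observes that for each $k$ one of $\Wdel_k\setminus\W_k$ or $\W_k\setminus\Wdel_k$ is empty; the resulting single-index terms and the coarea computation are identical.
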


\begin{proof}
We can calculate, in the notation of Lemma~\ref{lemma: characterization of vis sets},
\begin{align*}
\GUp{i}(\boldd+\bolddeltad)-\GUp{i}(\boldd)&=\meas{\bigcap_{l=1}^K\Wdel_l}-\meas{\bigcap_{l=1}^K\W_l}\\
&=\meas{\vis[\boldd+\bolddeltad]{\xbar_i}}-\meas{\vis[\boldd]{\xbar_i}}\\\\
&=\meas{\V_{\boldd+\bolddeltad, i}}-\meas{\V_{\boldd, i}}\\
&=\meas{\bigcap_{l=1}^K\Wdel_l\setminus\bigcap_{k=1}^K\W_k}-\meas{\bigcap_{k=1}^K\W_k\setminus\bigcap_{l=1}^K\Wdel_l}.
\end{align*}
Now note that
\begin{align*}
&\meas{\bigcap_{l=1}^K\Wdel_l\setminus\bigcap_{k=1}^K\W_k}\\
&=\meas{\bigcup_{k=1}^K\left(\bigcap_{l=1}^K\Wdel_l\setminus\W_k\right)}\\
&=\sum_{k=1}^K\meas{\bigcap_{l=1}^K\Wdel_l\setminus\W_k}-\sum_{1\leq k_1\neq k_2\leq K}\meas{\left(\bigcap_{l=1}^K\Wdel_l\setminus\W_{k_1}\right)\cap\left(\bigcap_{l=1}^K\Wdel_l\setminus\W_{k_2}\right)}+\ldots\\
&\qquad+(-1)^K\sum_{1\leq k_1\neq k_2\neq\ldots\neq k_K\leq K}\meas{\left(\bigcap_{l=1}^K\Wdel_l\setminus\W_{k_1}\right)\cap\left(\bigcap_{l=1}^K\Wdel_l\setminus\W_{k_2}\right)\cap\right.\\
&\qquad\left.\ldots\cap\left(\bigcap_{l=1}^K\Wdel_l\setminus\W_{k_K}\right)}.
\end{align*}
For any fixed index $k$, we have that $\bigcap_{l=1}^K\Wdel_l\setminus\W_k\subseteq \Wdel_k\setminus\W_k$, hence by Lemma~\ref{lemma: order of intersections of differences} above, all terms except the first one in the last line of calculations above has a rate of decay $o(\deltanorm)$ as $\deltanorm\to0$. 
Since we obtain a similar expression for 
\begin{align*}
 \meas{\bigcap_{k=1}^K\W_k\setminus\bigcap_{l=1}^K\Wdel_l},
 \end{align*}
we find
\begin{align*}
&\GUp{i}(\boldd+\bolddeltad)-\GUp{i}(\boldd)\\
&=\sum_{k=1}^K\left[\meas{\bigcap_{l=1}^K\Wdel_l\setminus\W_k}-\meas{\bigcap_{l=1}^K\W_l\setminus\Wdel_k}\right]+o(\deltanorm)\\
&=\sum_{k=1}^K\left[\meas{\bigcap_{l\neq k}^K\Wdel_l\cap\left(\Wdel_k\setminus\W_k\right)}-\meas{\bigcap_{l\neq k}^K\W_l\cap\left(\W_k\setminus\Wdel_k\right)}\right]+o(\deltanorm).
\end{align*}

Now, fix an index $1\leq k\leq K$. First assume that $\deltad{k}\geq 0$, hence $\W_k\setminus \Wdel_k=\emptyset$. Then, by making the change of variable $\p=-\D c(x, \xbar_i)$, we see that
\begin{align*}
&\meas{\bigcap_{l\neq k}^K\Wdel_l\cap\left(\Wdel_k\setminus\W_k\right)}-\meas{\bigcap_{l\neq k}^K\W_l\cap\left(\W_k\setminus\Wdel_k\right)}\\
&=\meas{\bigcap_{l\neq k}^K\Wdel_l\cap\left(\Wdel_k\setminus\W_k\right)}\\
&=\int_{\bigcap_{l\neq k}^K\Wdel_l\cap\left(\Wdel_k\setminus \W_k\right)}\I(x)\dVol_{\M}(x)\\
&=\int_{\coord{\bigcap_{l\neq k}^K\Wdel_l\cap\left(\Wdel_k\setminus \W_k\right)}{\xbar_i}}\I(X_i(\p))\lvert\det{DX_i(\p)}\rvert d\p\\
&=\left(\ast\right).
\end{align*}
 Recalling~\eqref{eqn: wcood expressions}, we see that
\begin{align*}
&\coord{\bigcap_{l\neq k}^K\Wdel_l\cap\left(\Wdel_k\setminus \W_k\right)}{\xbar_i}\\
&\qquad=\left\{\p\in \coord{\bigcap_{l\neq k}^K\Wdel_l}{\xbar_i}\mid \log{\dsub{k}}-\log{\dsub{i}}< -\ctil_{k, i}(\p)\leq \log{(\dsub{k}+\deltad{k})}-\log{\dsub{i}}\right\}.
\end{align*}
Then since $-\ctil_{k, i}(\p)$ is a Lipschitz function on $\Omegacoord{\xbar_i}$, by applying the coarea formula, we obtain
\begin{align}
&\left(\ast\right)\notag\\
&= \int_{\log{\dsub{k}}-\log{\dsub{i}}}^{\log{(\dsub{j}+\deltad{k})}-\log{\dsub{i}}}\left(\int_{\left\{-\ctil_{k, i}(\p)=t\right\}\cap \coord{\bigcap_{l\neq k}\Wdel_l}{\xbar_i}}\I(X_i(\p))\lvert  -D\ctil_{k, i}(\p)\rvert ^{-1}\lvert\det{DX_i(\p)}\rvert d\surface{\p}\right)dt\notag\\
%&\leq \maxconst \maxsource \int_{\log{\dsub{i}}-\log{(\dsub{j}+\deltad{j})}}^{\log{\dsub{i}}-\log{\dsub{j}}}\left(\int_{\left\{-\ctil(\p)=t\right\}}\lvert\left[-\hessxy{c(X_i(\p), \xbar_i)}\right]^{-1}\right. \\
%&\qquad\left.\left[-\D  c(X_i(\p), \xbar_i)+\D c(X_i(\p), \xbar_j)\right]\rvert^{-1}d\surface{\p}\right)dt\\
%&\leq \maxconst_{i,j} \maxsource \int_{\log{\dsub{i}}-\log{(\dsub{j}+\deltad{j})}}^{\log{\dsub{i}}-\log{\dsub{j}}}\left(\int_{\left\{-\ctil(\p)=t\right\}} d\surface{\p}\right)dt\\
&=\left(\int_{U_{k, \boldd, \bolddeltad}}\I(X_i(\p))\lvert  -D\ctil_{k, i}(\p)\rvert ^{-1}\lvert\det{DX_i(\p)}\rvert d\surface{\p}\right)(\log{(\dsub{k}+\deltad{k})}-\log{\dsub{k}})+o(\deltanorm)\notag\\
&= \frac{1}{\dsub{k}}\left(\int_{U_{k, \boldd, \bolddeltad}}\I(X_i(\p))\lvert  -D\ctil_{k, i}(\p)\rvert ^{-1}\lvert\det{DX_i(\p)}\rvert d\surface{\p}\right)\deltad{k}+o(\deltanorm)\notag\\
&=\frac{\beta_{i,k, \bolddeltad}\deltad{k}}{\dsub{k}}+o(\deltanorm)\label{eqn: kth equality}
\end{align}
as $\deltanorm\to 0$.

If $\deltad{k}<0$ we have $\Wdel_k\setminus\W_k=\emptyset$ instead, and following similar calculations we again obtain the expression~\eqref{eqn: kth equality}.
%\begin{align*}
%\left(\ast\right)%\leq\maxconst \maxsource \int_{\log{\dsub{i}}-\log{\dsub{j}}}^{\log{\dsub{i}}-\log{(\dsub{j}+\deltad{j})}}\left(\int_{\left\{-\ctil(q)=t\right\}}d\surface{q}\right)dt\\
%%&\leq\maxconst \maxsource\left[\surface{\bdry{\Omegacoord{\xbar_i}}}\right](\log{\dsub{j}}-\log{(\dsub{j}+\deltad{j})})\\
%&\leq -\dsub{j}^{-1}\maxconst_{i,j} \maxsource \left[\surface{\bdry{\Omegacoord{\xbar_i}}}\right]\deltad{j}+o(\deltanorm),
%\end{align*}
By summing over $1\leq k\leq K$, the proof is completed.
%
%Hence, following the same basic calculations as in the proof for Lemma~\ref{lemma: order of differences of sets} (keeping track of each term instead of replacing it by an upper bound) we obtain the desired formula.
\end{proof}

We are now ready to provide the necessary upper bound on $\pdiff[d]{i}$.
\begin{prop}\label{prop: bound on derivative of G}
$\GUp{i}$ is differentiable in $\dsub{i}$ and 
\begin{equation*}
%\pdiff[d]{i}{\GUp{i}} = -\dsub[-1]{i}\sum_{k\neq i}\beta_{k, i}
\lvert \pdiff[d]{i}\GUp{i}\rvert\leq \frac{K\maxconst \maxsource\left[\surface{\bdry{\Omegacoord{\xbar_i}}}\right]}{\dsub{i}}
\end{equation*}
for each $1\leq i\leq K$, where
\begin{align*}
\maxconst:=\max_{1\leq k\neq l\leq K}{\sup_{x\in\Omega}{\frac{\lvert\det{\left(-\hessxy{c(x, \xbar_l)}\right)}\rvert}{\lvert \left(-\hessxy{c(x, \xbar_l)}\right)^{-1}\left(-\D  c(x, \xbar_l)+ \D c(x, \xbar_k)\right)\rvert_{T_{\xbar_l}\Omegabar}}}}.
\end{align*}
\end{prop}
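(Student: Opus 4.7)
The central observation is that $\GUp{i}(\boldd)$ depends on $\boldd$ only through the ratios $\dsub{k}/\dsub{j}$: scaling every $\dsub{k}$ by $\lambda>0$ shifts $\phid$ by the constant $-\log\lambda$, which leaves the defining inequalities of $\V_{\boldd, i}$ untouched, so $\GUp{i}(\lambda\boldd)=\GUp{i}(\boldd)$. For small $h$ I would take $\lambda=\dsub{i}/(\dsub{i}+h)$ and set $\bolddeltad:=\lambda(\boldd+he_i)-\boldd$; this $\bolddeltad$ has $\deltad{i}=0$, $\deltad{k}=-h\dsub{k}/(\dsub{i}+h)$ for $k\neq i$, and $\deltanorm=O(|h|)$, while $\GUp{i}(\boldd+he_i)=\GUp{i}(\boldd+\bolddeltad)$ by scaling. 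This reduces the computation of $\pdiff[d]{i}\GUp{i}$ to an application of Proposition~\ref{prop: difference of G^k}.

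Applying Proposition~\ref{prop: difference of G^k} (the $k=i$ term vanishes because $\deltad{i}=0$),
\begin{equation*}
\GUp{i}(\boldd+he_i)-\GUp{i}(\boldd)=-\frac{h}{\dsub{i}+h}\sum_{k\neq i}\beta_{i,k,\bolddeltad}+o(h).
\end{equation*}
Dividing by $h$ and sending $h\to 0$ yields differentiability together with
\begin{equation*}
\pdiff[d]{i}\GUp{i}(\boldd)=-\frac{1}{\dsub{i}}\sum_{k\neq i}\beta_{i,k,0},
\end{equation*}
provided $\beta_{i,k,\bolddeltad}\to\beta_{i,k,0}$ as $\bolddeltad\to 0$. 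Since the integrand in $\beta$ depends only on $\p$ and the level set $\{-\ctil_{k,i}=\log\dsub{k}-\log\dsub{i}\}$ does not vary with $\bolddeltad$, this continuity reduces to the convergence of the sliced set $\coord{\bigcap_{l\neq k}\Wdel_l}{\xbar_i}$ (or its $\W_l$ counterpart) to $\coord{\bigcap_{l\neq k}\W_l}{\xbar_i}$ in Hausdorff distance, together with convergence of the associated $(d-1)$-dimensional Hausdorff measure.

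To bound each $|\beta_{i,k,0}|$ I would differentiate the defining relation $-\Dbar c(X_i(\p),\xbar_i)=\p$ to obtain $DX_i(\p)=(-\hessxy c(x,\xbar_i))^{-1}$ with $x=X_i(\p)$, and then expand $-D\ctil_{k,i}(\p)$ via the chain rule in terms of the covector $-\D c(x,\xbar_i)+\D c(x,\xbar_k)$; the quotient $|\det DX_i(\p)|\cdot|-D\ctil_{k,i}(\p)|^{-1}$ is then precisely the reciprocal of the expression defining $\maxconst$, hence is bounded by $\maxconst$. Combined with $\I\leq\maxsource$, this gives $|\beta_{i,k,0}|\leq \maxsource\,\maxconst\,\surface{U_{k,\boldd,0}}$. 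By Remark~\ref{rmk: \W_j are c-convex}, $\Wcoord{k}{i}$ is a convex subset of the convex set $\Omegacoord{\xbar_i}$, and $U_{k,\boldd,0}\subseteq\bdry{\Wcoord{k}{i}}$, so Lemma~\ref{lemma: monotonicity of surface measure of convex sets} yields $\surface{\bdry{\Wcoord{k}{i}}}\leq\surface{\bdry{\Omegacoord{\xbar_i}}}$. Summing the $K-1\leq K$ such bounds and dividing by $\dsub{i}$ produces the stated inequality.

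The main obstacle I anticipate is the continuity passage $\beta_{i,k,\bolddeltad}\to\beta_{i,k,0}$, which demands a careful convergence-of-domain argument for surface integrals on varying $(d-1)$-dimensional subsets of the cotangent space above $\xbar_i$ — in spirit similar to, but not a direct consequence of, Lemma~\ref{lemma: order of intersections of differences}. Once that continuity is secured, everything else is a packaging of the chain-rule computation above, Lemma~\ref{lemma: monotonicity of surface measure of convex sets}, and the identity for $\pdiff[d]{i}\GUp{i}$ just derived.
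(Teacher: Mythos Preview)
Your proposal follows essentially the same route as the paper: exploit the homogeneity $\GUp{i}(\lambda\boldd)=\GUp{i}(\boldd)$ to convert an increment in $\dsub{i}$ into increments in the remaining coordinates (with $\deltad{i}=0$), apply Proposition~\ref{prop: difference of G^k}, then bound each $\beta_{i,k}$ via the chain-rule identification of the integrand together with Lemma~\ref{lemma: monotonicity of surface measure of convex sets} (invoked through Theorem~\ref{thm: Loeper}) for the surface-measure factor. Regarding the obstacle you flag, the paper dispatches it with a one-line appeal to the dominated convergence theorem, and for the inequality it bounds $\lvert\beta_{i,k,\bolddeltad_t}\rvert$ uniformly in $t$ \emph{before} passing to the limit---so you need not first identify $\beta_{i,k,0}$ explicitly, which sidesteps the delicate convergence-of-domain issue for the bound itself.
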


\begin{proof}
Fix $\boldd>0$. Now it is easy to see that for any $\lambda >0$ we have $\GUp{i}(\boldd)=\GUp{i}(\lambda\boldd)$. We have used here~\eqref{twist},~\eqref{nondeg}, and the fact that $\I$ is bounded above to move the $o(t)$ terms outside of the summation. Thus, applying Proposition~\ref{prop: difference of G^k} above, we calculate for any $t$ with $\norm{t}<\dsub{i}$,
\begin{align}
&\GUp{i}(\dsub{1}, \ldots, \dsub{i}+t, \ldots, \dsub{K})-\GUp{i}(\boldd)\notag\\
&=\GUp{i}(\frac{\dsub{1}\dsub{i}}{\dsub{i}+t}, \ldots, \dsub{i}, \ldots, \frac{\dsub{K}\dsub{i}}{\dsub{i}+t})-\GUp{i}(\boldd)\notag\\
%&=\GUp{i}(\dsub{1}-\frac{\dsub{1}\dsub{i}}{\dsub[2]{i}}t+o(t), \ldots, \dsub{i}, \ldots, \dsub{K}-\frac{\dsub{K}\dsub{i}}{\dsub[2]{i}}t+o(t))-\GUp{i}(\boldd)\notag\\
&=\GUp{i}(\dsub{1}-\frac{\dsub{1}}{\dsub{i}}t+o(t), \ldots, \dsub{i}, \ldots, \dsub{K}-\frac{\dsub{K}}{\dsub{i}}t+o(t))-\GUp{i}(\boldd)\notag\\
&=\sum_{k=1}^K\frac{\beta_{i,k,\bolddeltad_t}}{{\dsub{k}}}\left(-\frac{\dsub{k}}{\dsub{i}}t\right)+o(t)\notag\\
&=-\frac{t}{\dsub{i}}\sum_{k=1}^K\beta_{i,k,\bolddeltad_t}+o(t)\label{eqn: difference quotient}
\end{align}
as $t\to 0$, and with $\bolddeltad_t:=(-\frac{\dsub{1}}{\dsub{i}}t, \ldots, 0, \ldots, -\frac{\dsub{K}}{\dsub{i}}t)$ with the $0$ in the $i$th component. Then, by applying the dominated convergence theorem we first see that $\pdiff[d]{i}{\GUp{i}}$ exists. 

In order to obtain the claimed inequality, first note that by the convexity of $\Omegacoord{\xbar_i}$ (by assumption) and Theorem~\ref{thm: Loeper}, the level sets of $-\ctil$ are the boundaries of convex subsets of $\Omegacoord{\xbar_i}^{\cl}$ . Thus by Lemma~\ref{lemma: monotonicity of surface measure of convex sets} we have
\begin{align*}
\surface{U_{k, \boldd, \bolddeltad_t}} \leq \surface{\bdry{\Omegacoord{\xbar_i}}}
\end{align*}
for any $1\leq k\leq K$ and $\norm{t}$ small. Additionally, by the definition of $X_i$ and $\maxconst$, and~\eqref{twist} and~\eqref{nondeg}, we can calculate for any $1\leq k\leq K$, 
\begin{align*}\label{eqn: maxconst inequality}
 \lvert  -D\ctil_{k, i}(q)\rvert ^{-1}\lvert\det{DX_i(q)}\rvert\leq \maxconst,
\end{align*}
hence by combining with~\eqref{eqn: difference quotient} we obtain
%Using Lemma~\ref{lemma: monotonicity of surface measure of convex sets} we obtain%since $-\Dbar c(\cdot, \xbar_i)$ is a diffeomorphism,
\begin{align*}
\lvert\pdiff[d]{i}{\GUp{i}}\rvert&=\lvert\lim_{t\to 0}\frac{\GUp{i}(\dsub{1}, \ldots, \dsub{i}+t, \ldots, \dsub{K})-\GUp{i}(\boldd)}{t}\rvert\\
&\leq\frac{1}{\dsub{i}}\sum_{k=1}^K\lim_{t\to0}{\lvert\beta_{i,k,\bolddeltad_t}\rvert}\\
%&=\dsub[-1]{i}\sum_{k\neq i}^K\int_{U_{k, \boldd}}\I(X(q))\lvert  -D\ctil_{k, i}(q)\rvert ^{-1}\lvert\det{\left(-\hessxy{c(X(q), \xbar_i)}\right)}\rvert d\surface{q}\\
&\leq \frac{K\maxconst \maxsource \left[\surface{\bdry{\Omegacoord{\xbar_i}}}\right]}{\dsub{i}}
\end{align*}
as claimed.
% Recalling that  each $\W_{i, k, \boldd}$ and $\Omega$ are $c$-convex with respect to $\xbar_i$, and since $U_{k, \boldd}\subseteq\Wbdry{i, k, \boldd}$, we may apply Lemma~\ref{lemma: monotonicity of surface measure of convex sets} to obtain, 
%\begin{align*}
%\surface{U_{k, \boldd}}&\leq \sup_{y\in\Omegabar}{\surface{\bdry{\Omegacoord{\xbar}}}}.
%\end{align*}
%(which is finite by the compactness of $\Omegabar$) finishing the proof.
\end{proof}

\section{An upper bound on the number of steps}\label{section: upper bound on steps}
Recall that when we initialize the scheme, we require an initial choice of $\boldd^0$ such that $\GUp{i}(\boldd^0)\leq \alphasub_i+\delta$ for all $2\leq i \leq K$. This can be attained if we make the choice of 
\begin{equation}\label{eqn: starting upper bound}
\begin{cases}
\dsub[0]{1}=1&\\
\dsub[0]{i}=\parameterinterval, & i\neq 1
\end{cases}
\end{equation}
where
\begin{equation*}
\parameterinterval:=\max_{1<k\leq K}{\sup_{x\in\Omega}{e^{-c(x, \xbar_k)+c(x, \xbar_1)}}}+1.
\end{equation*}
In fact, with this choice we have $\GUp{i}(\boldd^0)=0$ for all $2\leq i \leq K$. Now notice that in each step, we either do not change the value of each $\dsub{i}$ or decrease it, so we may assume these values as upper bounds for our choices of $\dsub{i}$ throughout the algorithm. Now, recalling that the value of $\dsub{1}$ does not change (hence $\dsub{1}\equiv 1$ throughout), and that at every step our choice of $\phid$ remains in the set $\phidel$, we can use~\eqref{eqn: delta restriction} to calculate 
\begin{align*}	
0&\leq \alphasub_1-K\delta\\
&< \alphasub_1-(K-1)\delta\\
&=1-\sum_{i=2}^K(\alphasub_i+\delta)\\
&\leq 1-\sum_{i=2}^K\GUp{i}(\boldd)\\
&=\GUp{1}(\boldd).
\end{align*}

Now if we write
\begin{equation*}
\minconst:= \min_{1<k\leq K}{\inf_{x\in\Omega}{e^{-c(x, \xbar_k)+c(x, \xbar_1)}}}
\end{equation*}
we can see from definition that $\vis{\xbar_1}=\emptyset$ if $\dsub{i}< \minconst$ for any $i\neq 1$, and by Lemma~\ref{lemma: characterization of vis sets} we would have $\GUp{1}(\boldd)=0$. Hence there is a lower bound of 
\begin{align}
 \dsub{i}\geq \minconst\label{eqn: dsub lower bound}
\end{align}
for any $\dsub{i}$ with $i\neq 1$.

Now, say we are at the $(n, i-1)$--st step of the algorithm, and we must decrease $\dsub[{n, i-1}]{i}$ to $\dbar[{n, i-1}]{i}$. In this case we can combine Proposition~\ref{prop: bound on derivative of G} with~\eqref{eqn: dsub lower bound} to obtain, 
\begin{align*}
\delta&\leq \alphasub_{i}-\GUp{i}(\phi_{n, i-1})\\
&\leq \GUp{i}(\phi_{n, i})-\GUp{i}(\phi_{n, i-1})\\
&=\GUp{i}(\dsub[{n, i-1}]{1}, \ldots, \dbar[{n, i-1}]{i}, \ldots, \dsub[{n, i-1}]{K})-\GUp{i}(\dsub[{n, i-1}]{1}, \ldots, \dsub[{n, i-1}]{i}, \ldots, \dsub[{n, i-1}]{K})\\
&\leq \sup\left\vert\pdiff[d]{i}\GUp{i}\right\vert(\dsub[{n, i-1}]{i}-\dbar[{n, i-1}]{i})\\
&\leq \frac{K\maxconst \maxsource }{\minconst}\left[\surface{\bdry{\Omegacoord{\xbar_i}}}\right](\dsub[{n, i-1}]{i}-\dbar[{n, i-1}]{i})
\end{align*}
in other words, we obtain a strictly positive lower bound for the magnitude of the decrease in the parameter $\dsub{i}$.

However, since only strictly positive values of $\dsub{i}$ are admissible and the parameters $\dsub{i}$ can only decrease, we see that each $\dsub{i}$ can only be decreased a finite number of times. The worst case scenario is when only one parameter is updated per iteration of the scheme, hence the maximum number of iterations is given by $K$ times the maximum number of times each $\dsub{i}$ can be updated. Thus by the initial choice~\eqref{eqn: starting upper bound}, we see that an upper bound on the number of iterations $\nstop$ that this algorithm can take is given by the claimed bound~\eqref{eqn: step bound}:
\begin{equation*}
\nstop\leq K\left[\frac{K\maxconst \parameterinterval\maxsource}{\delta \minconst}\left[\max_{1\leq i\leq K}{\surface{\bdry{\Omegacoord{\xbar_i}}}}\right]+1\right].
\end{equation*}
\bibliographystyle{plain}
\bibliography{mybiblio}

\end{document}